\documentclass[sigconf]{acmart}
\usepackage{microtype}%if unwanted, comment out or use option "draft"
\usepackage{mathtools}% http://ctan.org/pkg/mathtools
\usepackage{hyperref}% http://ctan.org/pkg/hyperref
\usepackage{latexsym}
\usepackage{amssymb}
\usepackage{amsmath}
\usepackage{amsfonts}
\usepackage{array}
\usepackage{graphicx}
\usepackage[utf8]{inputenc}
\usepackage{tikz}
\usetikzlibrary{positioning}
\usetikzlibrary{patterns}
\usetikzlibrary{calc}

\DeclarePairedDelimiter{\diagfences}{(}{)}
\DeclarePairedDelimiter{\set}{\{}{\}}
\newcommand{\diag}{\operatorname{diag}\diagfences}

\newcommand{\Q}{\mathbb{Q}}
\newcommand{\R}{\mathbb{R}}

\newcommand{\N}{\mathbb{N}}
\newcommand{\Z}{\mathbb{Z}}
\newcommand{\Ext}{\operatorname{Ext}}

\newcommand{\Aff}{\operatorname{Aff}}

\newcommand{\Span}{\operatorname{Span}}

\DeclarePairedDelimiter{\Angle}{\langle}{\rangle}

\newcommand{\calP}{\mathcal{P}}

\newcommand{\boldx}{\boldsymbol{x}}
\newcommand{\boldy}{\boldsymbol{y}}
\newcommand{\boldz}{\boldsymbol{z}}
\newcommand{\bolds}{\boldsymbol{s}}
\newcommand{\boldt}{\boldsymbol{t}}
\newcommand{\boldu}{\boldsymbol{u}}
\newcommand{\bolde}{\boldsymbol{e}}
\newcommand{\boldv}{\boldsymbol{v}}
\newcommand{\boldw}{\boldsymbol{w}}
\newcommand{\boldtau}{\boldsymbol{\tau}}
\newcommand{\boldzero}{\boldsymbol{0}}

\newcommand{\MAX}{\mathrm{max}}
\newcommand{\MIN}{\mathrm{min}}
\newcommand{\CONE}{\mathrm{Cone}}
\newcommand{\DOM}{\mathrm{Dom}}

\definecolor{darkgreen}{rgb}{0.1,0.7,0.1}

\theoremstyle{acmplain}
\newtheorem{theorem}{Theorem}[section]

\newtheorem{proposition}[theorem]{Proposition}
\newtheorem{lemma}[theorem]{Lemma}

\theoremstyle{acmdefinition}

\newtheorem{definition}[theorem]{Definition}
\newtheorem{claim}[theorem]{Claim}

% Copyright
%\setcopyright{none}
%\setcopyright{acmcopyright}
%\setcopyright{acmlicensed}
\setcopyright{rightsretained}
%\setcopyright{usgov}
%\setcopyright{usgovmixed}
%\setcopyright{cagov}
%\setcopyright{cagovmixed}

% DOI
\acmDOI{10.475/123_4}

% ISBN
\acmISBN{123-4567-24-567/08/06}

%Conference
\acmConference[WOODSTOCK'97]{ACM Woodstock conference}{July 1997}{El
  Paso, Texas USA}
\acmYear{1997}
\copyrightyear{2016}

\acmPrice{15.00}

\acmSubmissionID{123-A12-B3}

\title{On the Decidability of Reachability in Linear Time-Invariant Systems}
% \titlenote{}
% \subtitle{Extended Abstract}
% \subtitlenote{The full version of the author's guide is available as
%   \texttt{acmart.pdf} document}

%\titlerunning{On the Decidability of Reachability in LTI Systems}
\author{Nathana\"{e}l Fijalkow}
\affiliation{\institution{CNRS, LaBRI, Bordeaux, France}}
\affiliation{\institution{Alan Turing Institute of Data Science, London, United Kingdom}}
\email{nathanael.fijalkow@labri.fr}

\author{Jo\"{e}l Ouaknine}
\affiliation{Max Planck Institute for Software Systems, Saarland
  Informatics Campus, Germany}
\email{joel@mpi-sws.org}

\author{Amaury Pouly}
\affiliation{CNRS, IRIF, Université Paris Diderot, France}
\email{amaury.pouly@irif.fr}

\author{Jo\~{a}o Sousa-Pinto}
\affiliation{Department of Computer Science, University of Oxford, UK}
\email{jspinto@cs.ox.ac.uk}

\author{James Worrell}
\affiliation{Department of Computer Science, University of Oxford, UK}
\email{jbw@cs.ox.ac.uk}

% The default list of authors is too long for headers.

\begin{CCSXML}
<ccs2012>
<concept>
<concept_id>10010147.10010178.10010213.10010214</concept_id>
<concept_desc>Computing methodologies~Computational control theory</concept_desc>
<concept_significance>500</concept_significance>
</concept>
</ccs2012>
\end{CCSXML}

\ccsdesc[500]{Computing methodologies~Computational control theory}

\keywords{LTI systems, control theory, reachability, decidability,
  recurrence sequences}

\begin{document}

\begin{abstract}
We consider the decidability of state-to-state reachability in linear
time-invariant control systems over discrete time.  We analyse this
problem with respect to the allowable control sets, which in general
are assumed to be defined by boolean combinations of linear inequalities.
Decidability of the version of the reachability problem in which
control sets are affine subspaces of $\R^n$ is a fundamental result in
control theory.  Our first result is that reachability is undecidable
if the set of controls is a finite union of affine subspaces.  We also
consider versions of the reachability problem in which (i)~the set of
controls consists of a single affine subspace together with the origin
and (ii)~the set of controls is a convex polytope.  In these two cases
we respectively show that the reachability problem is as hard as
Skolem's Problem and the Positivity Problem for linear recurrence
sequences (whose decidability has been open for several decades).  Our
main contribution is to show decidability of a version of the
reachability problem in which control sets are convex polytopes, under
certain spectral assumptions on the transition matrix.

\end{abstract}

\maketitle

\begin{acks}
  This project has received funding from the Alan Turing Institute
  under EPSRC grant EP/N510129/1.  Jo\"el Ouaknine was supported by
  ERC grant AVS-ISS (648701), and by the Deutsche
  Forschungsgemeinschaft (DFG, German Research Foundation) ---
  Projektnummer 389792660 --- TRR 248. Jo\"el Ouaknine is also
  affiliated with the Department of Computer Science, University of
  Oxford, UK.  Nathana{\"e}l Fijalkow and Amaury Pouly were supported
  by the CODYS project ANR-18-CE40-0007.  James Worrell was supported
  by EPSRC Fellowship EP/N008197/1.  Amaury Pouly did part of this
  work at the Max Planck Institute for Software Systems, Saarland
  Informatics Campus, Germany.
\end{acks}

\section{Introduction}
This paper is concerned with \emph{linear time-invariant (LTI)
  systems}.  LTI systems are one of the most basic and fundamental
models in control theory and have applications in circuit design,
signal processing, and image processing, among many other areas.  LTI
systems have both discrete-time and continuous-time variants; here we
are concerned solely with the discrete-time version.

A (discrete-time) LTI system in dimension $d$ is specified by a
transition matrix $A \in \mathbb{Q}^{d\times d}$ and a set of controls
$U\subseteq \mathbb{R}^d$.  The evolution of the system is described
by the recurrence $\boldsymbol{x}_{t+1} = A\boldsymbol{x}_t
+ \boldsymbol{u}_t$, where $\boldsymbol{u}_t \in U$ for all
$t\in \mathbb{N}$.  Here we think of the vectors $\boldsymbol{u}_t$ as
inputs that are applied to the system.

Given such an LTI system, we say that state $\bolds \in \R^d$ can
\emph{reach} state $\boldt \in \R^d$ if there exists $T\geq 0$ and a
sequence of controls
$\boldsymbol{u}_0,\ldots,\boldsymbol{u}_{T-1}\in U$ such that the
unique solution to the recurrence
$\boldsymbol{x}_{t+1}=A\boldsymbol{x}_t+\boldsymbol{u}_t$ with initial
condition $\boldsymbol{x}_0 = \bolds$ satisfies
$\boldsymbol{x}_T=\boldsymbol{t}$.  The problem of computing the set
of all states reachable from a given initial state has been an active
topic of research for several decades.  Here the emphasis is typically
on efficient and scalable methods to over- and under-approximate the
reachable
set~\cite{CattaruzzaASK15,GirardGM06,GirardG08,Kaynama2010OverapproximatingTR,SummersWS92}.
By contrast, relatively little attention has been paid to the
\emph{decidability} of the reachable set---the focus of the present
paper.  Specifically we consider the \emph{LTI Reachability Problem}:
given an LTI system, source state $\bolds$, and target state $\boldt$,
decide whether $\bolds$ can reach $\boldt$.  
  The main axis along which we delineate variants of the
LTI Reachability Problem concerns the class of allowable control sets
(e.g., affine subspaces, convex polytopes, etc.).

Other reachability problems on LTI systems include so-called
\emph{null reachability} (can one reach all states from the origin?)
and \emph{null controllability} (can one reach the origin from all
states?)~\cite{BlondelT99a}.  However these ``universal'' reachability
problems have a very different character to the point-to-point version
that we study.  In particular, both null reachability and null
controllable are decidable in polynomial time using linear algebra.

One of the first people to address the LTI Reachability Problem was
Harrison~\cite{Harrison} who posed the question of whether the problem
is decidable when the allowable control sets are vector subspaces of
$\R^d$. (Harrison~\cite{Harrison} called this the
  \emph{accessibility problem for linear sequential machines}.)
Harrison's question was resolved in a seminal paper of Lipton and
Kannan~\cite{KL86}, who gave a polynomial-time procedure for the LTI
Reachability Problem in the case of linear control sets.  The case in
which the allowable control sets are affine subspaces of $\R^d$ (i.e.,
translates of linear subspaces) can easily be reduced to the linear
case by a standard homogenisation trick.

The starting point of the present paper is to give a number of
hardness results for relatively mild generalisations of
Harrison's problem.  Specifically we show that:
\begin{enumerate}
\item If the allowable control sets are 
  finite unions of affine subspaces of $\R^d$, then the LTI
  Reachability Problem is undecidable.
\item If the allowable control sets are of the form $V\cup\set{\boldsymbol{0}}$,
with $V$ an affine subspace of $\R^d$, then the
LTI Reachability Problem is as hard as Skolem's Problem for linear
  recurrence sequences.
\item If the allowable control sets are convex polytopes then
the LTI Reachability Problem is as hard as the Positivity Problem for
  linear recurrence sequences.
\end{enumerate}
Skolem's Problem asks whether a given integer linear recurrence
sequence has a zero term, while the Positivity Problem asks whether
all terms of a given integer linear recurrence sequence are positive.
The decidability of both problems has been open since the
1970s~\cite{SalomaaS78,RS94,Tao08,TUCS05}.  To date, decidability of
Skolem's Problem is known only for recurrences of order at most
4~\cite{Ver85,MST84} and decidability of the Positivity Problem is
known only for recurrences of order at most 5~\cite{PP}.  Thus the
results in this paper suggest that deciding the LTI Reachability
Problem for any class of control sets more general than affine
subspaces will prove a very challenging problem.  Note however that
the problem is straightforwardly semi-decidable, as
reachability in $n$ steps for each fixed $n\in\mathbb{N}$ is easily
reduced to solving a linear program.

\textbf{Main Result.}
Our main result is a decision procedure for a version of the LTI
Reachability Problem in which the initial state is the origin and the
target is a convex polytope (generalising the case of reaching a
single state).  We assume that the control set is a convex polytopic
neighbourhood of the origin.  Intuitively, the condition that
$\boldsymbol{0}$ lie in the interior of the control set ensures that
we can control in every direction.  As one might expect from the above
discussion of hardness, our decision procedure requires fairly strong
hypotheses on the transition matrix $A$ in order to work.
Specifically we assume that (i)~$A$ has spectral radius $\rho(A)<1$
and (ii)~some positive power of $A$ has exclusively real spectrum
(generalising the requirement that $A$ have real spectrum).
Condition~(i) is equivalent to the requirement that the system without
input be asymptotically stable (also called Schur stable).
Condition~(ii) has appeared in closely related contexts, such as
o-minimal hybrid systems (see~\cite[Theorem 6.2]{LafferrierePS00}
and~\cite[Theorem 4.6]{ShakerniaSP00}) and self-affine fractals (see
below).  

As we will show, without loss of generality we can restrict attention
to LTI systems in which the set of vectors reachable from the origin
is full dimensional.  In this case, Condition~(i) and the assumptions
on the control set entail that the set of reachable vectors is a
bounded convex open subset of $\mathbb{R}^n$.  The essential challenge
in deciding reachability is to handle the case in which the target
point lies on the boundary of the reachable set.  Condition (ii) plays
two roles in this respect.  First we use it to show that any
unreachable point is separated from the reachable set by a hyperplane
whose normal vector has algebraic-number coefficients (having
previously observed that hyperplanes with rational normal vectors do
not suffice: see Figure~\ref{fig:convex-lower-dim-face}).  Thus
Condition~(ii) ensures that we have an enumerable set of ``witnesses''
of non-reachability.  Moreover we use this same condition to show that
we can effectively verify such witnesses, i.e.  decide whether some
hyperplane with a given normal vector indeed separates the reachable
set from the target point.

\textbf{Related Work.}  It well understood that most control problems
are undecidable for mild generalisations of linear
systems~\cite{BlondelT99,BlondelT00}.  For example, point-to-point
reachability is undecidable for \emph{piecewise linear
  systems}~\cite{AsarinMP95,BlondelT99a,KoiranCG94} and for
\emph{saturated linear systems}~\cite{SiegelmannS95}.  However to the
best of our knowledge no previous work has attempted to systematically
map the border of decidability for point-to-point reachability
\emph{within} the class of LTI systems.  Indeed it is sometimes
considered that point-to-point reachability is efficiently decidable
for LTI systems (see, e.g., the discussion in~\cite[Section
4.1]{BlondelT00}).  The results of this paper illustrate that the
latter view crucially depends on the assumption that the set of
controls $U$ form a linear (or affine) subspace.  But such an
assumption does not allow to express many natural requirements, e.g.,
that $U$ be bounded.

A range of different control problems for discrete- and
continuous-time LTI systems under constraints on the set of controls
have been studied in the
literature~\cite{Cook80,Sontag84,SummersWS92,HuQiu98,HuMQ02,HuLQ02,TilS86,Grantham1975,SchmitendorfB80,HeemelsC08,
GirardG08,Jamak00,Zhao17}.  Although it is very common to consider
systems with saturated inputs, we do not consider the problem of
controller design and thus saturated inputs reduce to having inputs in
the unit hypercube in our case.  LTI systems with convex input
constraints have been considered in the past (see the references
above) but we are not aware of any complete characterisation of the
reachable set in this case, except in the case of conical constraints.

There is a clear relationship between the LTI Reachability Problem with
bounded convex control sets and self-affine fractals.  For LTI systems
with spectral radius $\rho(A)<1$ and with input set $U$ a convex
polytope, the closure of the set of states reachable from
$\boldsymbol{0}$ is the convex hull of the self-affine
fractal $\mathcal{F}$ arising as the unique solution of the set equation
$\mathcal{F}=A\mathcal{F}+\Ext(U)$, where $\Ext(U)$ denotes the set of
extreme points of $U$. We are aware of several
results~\cite{KiratK2015,Vass2015} on the computability of the convex
hulls of such fractals (and more general types of fractals). However
those results only apply to the case when the convex hull is a
polytope and usually only in dimension $2$.  The requirements on the
spectrum of $A$ in our positive decidability result are related to the
so-called fractal of unity of~\cite{Vass2015}.

\section{Undecidability and Hardness}

In this section we give evidence for the hardness of the LTI
reachability problem.  We show undecidability if the set of controls
is a finite union of affine subspaces, we give a reduction from the
Positivity Problem in case the set of controls is a bounded convex
polytope, and we give a reduction from Skolem's Problem in case the
set of controls is a union of two affine subspaces.

\subsection{Undecidability}
The goal of this subsection is to prove the following result.
\begin{theorem}\label{thm:undecidability}
  The reachability problem for LTI systems whose sets of controls are
  finite unions of affine subspaces is undecidable.
\end{theorem}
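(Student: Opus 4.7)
The plan is to reduce from a known undecidable problem; a natural and well-suited choice is the halting problem for deterministic two-counter (Minsky) machines, which is undecidable by a classical result of Minsky. Given such a machine $M$ with finite state set $Q$, the goal is to construct an LTI system $(A, U)$ in dimension polynomial in $|Q|$, together with a source $\bolds$ and target $\boldt$, such that $M$ halts from its initial configuration iff $\boldt$ is reachable from $\bolds$.

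First, I would encode a Minsky configuration $(q, c_1, c_2)$ as a vector in $\R^{|Q|+2+m}$, where the first $|Q|$ coordinates form a one-hot indicator of the control state, the next two record the counter values, and the final $m$ coordinates serve as \emph{witness} coordinates used to rule out cheating. The matrix $A$ would act as the identity on the control-state and counter blocks (so updates there are purely additive), but would contain a diagonal block with eigenvalues $>1$ on the witness block, causing any discrepancy introduced there to be amplified rather than cancelled. The control set $U$ would then be written as $\bigcup_i V_i$, one affine subspace per instruction of $M$: increments and unconditional jumps contribute single-point subspaces encoding the corresponding update vector, while each zero-test instruction ``\emph{if $c_j=0$ goto $q'$ else decrement $c_j$, goto $q''$}'' contributes two subspaces, one per branch. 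The affine (nonpoint) freedom is reserved for the witness coordinates on the zero-test subspaces; the remaining coordinates of each $V_i$ are pinned.

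The crucial step is the design of the witness block. Since the LTI control is applied blindly (it cannot inspect $\boldx_t$), nothing locally prevents the simulator from taking, say, the ``$c_j=0$'' branch while the true counter value is positive. The witnesses are designed so that a ``wrong'' branch choice forces a nonzero residue into the witness block, which the expanding dynamics then prevent from ever being eliminated by a subsequent control in $U$. By pinning the witness coordinates of $\boldt$ to $\boldzero$ (and the control-state coordinates of $\boldt$ to the one-hot indicator of the halting state), we ensure $\boldt$ is reachable from $\bolds$ iff there is a faithful simulation of $M$ reaching the halting state, i.e.\ iff $M$ halts.

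The main obstacle is exactly this witness construction: one must exhibit, for each zero-test, affine constraints on the witness increments whose feasibility along the entire run is equivalent to honesty of every zero-test. A workable template is to let each witness coordinate $w_j$ evolve by $w_j \mapsto \lambda w_j + \delta_t$, with $\lambda>1$ rational and $\delta_t$ chosen from the affine freedom of the current $V_i$; the constraints in the zero-test subspaces force $\delta_t$ to depend linearly on the counter coordinate through a consistency condition that, together with $w_0=0$ and the terminal requirement $w_j=0$, can only be met if no cheating occurred. Verifying that this encoding does rule out all spurious runs, and that honest runs can reach $\boldt$, is the technical core of the argument; once that is in place, the equivalence ``$\boldt$ reachable $\iff M$ halts'' gives the theorem.
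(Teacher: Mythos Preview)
Your plan has a genuine gap at exactly the point you identify as ``the technical core'': the witness mechanism cannot work as you describe it. The control sets $V_i$ are fixed subsets of $\R^d$; they can constrain the components of the control vector $\boldu_t$ relative to one another, but they cannot constrain $\boldu_t$ relative to the current state $\boldx_t$. So the phrase ``force $\delta_t$ to depend linearly on the counter coordinate'' has no meaning in this model---the counter coordinate is part of $\boldx_t$, not of $\boldu_t$. If the witness coordinate evolves as $w \mapsto \lambda w + \delta_t$ with $\delta_t$ free (the ``affine freedom'' you reserve for it), then regardless of any cheating on zero-tests the player can simply take $\delta_t=0$ throughout, or more generally pick the $\delta_t$'s at the end to satisfy the single linear equation $\sum_t \lambda^{T-1-t}\delta_t=0$. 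The expanding eigenvalue $\lambda>1$ does not help: it amplifies whatever is in $w$, but nothing in your construction ever puts a nonzero value into $w$ as a \emph{consequence} of a dishonest zero-test. Coupling $w$ to the counter through an off-diagonal entry of $A$ does not obviously help either, since then $w$ accumulates counter values at \emph{every} step and one must disentangle faithful runs from unfaithful ones, which you have not addressed.

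The paper avoids zero-tests altogether by reducing from a different undecidable problem: the vector reachability problem for products of invertible matrices (given invertible $A_1,\dots,A_k$ and vectors $\boldx,\boldy$, decide whether $\prod_i A_i^{n_i}\boldx=\boldy$ for some integers $n_i$). The LTI system has $A=\diag{I_d,A_1,\dots,A_k,I_k}$ acting on $k+1$ blocks of size $d$ plus a counter block; the affine control subspaces allow one to subtract an arbitrary vector $\boldz$ from block $i$ and add it to block $i{+}1$ (while incrementing a counter that enforces each transfer happens exactly once). Thus the affine freedom is used to \emph{copy} a vector between blocks, and the matrix $A$ supplies the powers $A_i^{n_i}$ in between transfers. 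This encodes a multiplicative problem directly, with no need to simulate a conditional test.
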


We prove Theorem~\ref{thm:undecidability} by reduction from the
\emph{vector reachability problem for invertible matrices}: given
invertible matrices $A_1, \ldots, A_k \in \Q^{d \times d}$ and vectors
$\boldx, \boldy \in \Q^d$, do there exist integers $n_1,
\ldots, n_k$ such that $\prod\limits_{i=1}^k A_i^{n_{i}} \boldx =
\boldy$?  The undecidability of this problem is folklore
(but see~\cite{FOPPW18} for a proof).  The key
idea underlying the reduction of this problem to the reachability
problem for LTI systems is to form an LTI whose transition matrix $A$
incorporates $A_1,\ldots,A_k$, and to provide a set of controls that
can be used to simulate the successive application of powers of $A_1$,
$A_2$, \emph{etc}, by repeated application of $A$.  A subtle technical
point here is to make the reduction robust with respect to the different orders
in which the controls can be applied.

\begin{proof}[Proof of Theorem~\ref{thm:undecidability}]
We reduce the vector reachability problem for invertible matrices to
the reachability problem for LTI systems.  

Let $A_1, \ldots, A_k \in \Q^{d \times d}$ be invertible matrices and
$\boldx, \boldy \in \Q^d$.  From these data we define an LTI system
$\mathcal{L}=(A,U)$ in dimension $D:=(k+1)d+k$.  We consider the state
space of $\mathcal{L}$ to be
\[ \underbrace{\R^d \oplus \cdots \oplus \R^d}_{k+1} \oplus \R^{k}
  \, ,\] that is, each state comprises a $(k+1)$-tuple of vectors in
$\mathbb{R}^d$ followed by a single vector in $\mathbb{R}^{k}$.

Matrix $A$ is a block diagonal matrix of dimension
$D\times D$, given by
\[ A := \diag{I_d, A_1, \ldots, A_k, I_k} \, . \]

For all $i \in \set{1,\ldots,k}$ define $V_i\subseteq \R^D$ by
\[ V_i := \set{\boldsymbol{0}}^{i-1} \times \set{(\boldz,-\boldz) :
    \boldz\in\R^d} \times \set{\boldsymbol{0}}^{k-i} \times
  \set{\bolde_i} \, , \] where $\bolde_i \in \R^{k}$ denotes $i$-th
coordinate vector and $\boldsymbol{0}$ denotes the zero vector in
$\R^d$.

We now define the set of controls $U\subseteq \R^D$  by
\begin{align*}
U:=V_1 +  \cdots + V_k \, .
\end{align*}
Given $i\in\set{1,\ldots,k}$, we think of $V_i$ as being
comprised of \emph{atomic controls}.  Such a control is determined by
the index $i$ and a vector $\boldz \in \R^d$.  Application of the
control subtracts $\boldz$ from the $i$-th block within the global
state and adds $\boldz$ to the $(i+1)$-st block.  Intuitively the
definition of $U$ as a sum of the $V_i$ allows to apply several atomic
controls at the same time.  Note that $U$ can be written as a union of 
affine subspaces.

Finally, we define the initial state to be $\bolds := (\boldx, \boldsymbol{0}, \ldots,
\boldsymbol{0}, \boldsymbol{0})$ and the target state to be $\boldt := \left(\boldsymbol{0},
\ldots, \boldsymbol{0}, \boldy, \boldsymbol{1} \right)$.

This completes the definition of the LTI system.  We now argue that
$\boldt$ is reachable from $\bolds$ if and only if there exist
$n_1, \ldots, n_k \in \mathbb{Z}$ such that
$\prod\limits_{i=1}^{k}A_{i}^{n_{i}} \boldx = \boldy$.  We divide the
argument into two claims.

\begin{claim}
Given integers $n_1,\ldots,n_k$, there exist non-negative integers
$t_1,\ldots,t_{k+1}$ such that
\begin{equation}
\begin{array}{rcl}
n_1 &=& t_2-t_1\\
       &\vdots & \\
n_k &=& t_{k+1}-t_k \, .
\end{array}
\label{eq:star}
\end{equation}
\label{claim1}
\end{claim}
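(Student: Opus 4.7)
The claim is a purely arithmetic statement about $k$ linear equations in $k+1$ unknowns, so there is one degree of freedom available to enforce the non-negativity constraint. My plan is to exploit this freedom by choosing $t_1$ sufficiently large and then defining the remaining $t_i$ by forward substitution.

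Concretely, I would rewrite the system \eqref{eq:star} as the telescoping relation $t_{j+1} = t_1 + \sum_{i=1}^{j} n_i$ for $j = 0, 1, \ldots, k$ (where the empty sum is zero, so the $j=0$ case gives $t_1 = t_1$). From this expression it is transparent that once $t_1$ is fixed, all other $t_j$ are determined, and the system \eqref{eq:star} is automatically satisfied. The only thing left to enforce is non-negativity of each $t_j$.

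For this, I would set
\[
t_1 := \max\Bigl(0,\; -\min_{0 \leq j \leq k} \sum_{i=1}^{j} n_i \Bigr),
\]
which is a non-negative integer since the partial sums $\sum_{i=1}^{j} n_i$ are integers. With this choice, for every $j \in \set{0,1,\ldots,k}$ we have $t_1 + \sum_{i=1}^{j} n_i \geq 0$, so each $t_{j+1}$ is a non-negative integer.

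There is no real obstacle here: the claim amounts to observing that a discrete path with prescribed increments $n_1, \ldots, n_k$ can be shifted upward by any constant, and choosing that constant to clear the most negative partial sum yields the desired non-negative values. The statement is recorded separately only because it will be invoked in the subsequent claim to translate an unconstrained integer solution $(n_1, \ldots, n_k)$ of the matrix equation into a schedule of non-negative waiting times $(t_1, \ldots, t_{k+1})$ for applying the atomic controls in the LTI system.
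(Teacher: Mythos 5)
Your proof is correct. You give a one-shot explicit construction: write the solution in telescoped form $t_{j+1} = t_1 + \sum_{i=1}^{j} n_i$ and choose $t_1$ to clear the most negative partial sum, namely $t_1 = \max\bigl(0, -\min_{0\leq j\leq k}\sum_{i=1}^{j} n_i\bigr)$ (in fact the maximum with $0$ is redundant, since the empty partial sum is $0$, so $-\min_j \sum_{i=1}^{j} n_i \geq 0$ already). The paper instead argues by induction on $k$: it assumes nonnegative $t_1,\ldots,t_{k+1}$ exist for the first $k$ differences, observes that the whole tuple may be translated by a common integer without disturbing the difference equations so as to guarantee $n_{k+1}+t_{k+1}\geq 0$, and then appends $t_{k+2}:=n_{k+1}+t_{k+1}$. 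The two arguments rest on the same observation --- solutions of the difference system are determined only up to a common additive shift, which can be taken large enough to enforce non-negativity --- but your version makes the shift explicit and quantitative in a single step, whereas the paper's inductive packaging hides the size of the shift and is marginally longer. Either proof serves equally well for the role the claim plays in the reduction.
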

\begin{proof}
  The proof is by induction on $k$.  The base case ($k=1$) is obvious.
  For the induction step, suppose we are given integers
  $n_1,\ldots,n_{k+1}$.  By induction we can find nonnegative integers
  $t_1,\ldots,t_{k+1}$ such that (\ref{eq:star}) holds.  We can
  moreover assume that $n_{k+1}+t_{k+1} \geq 0$ since (\ref{eq:star})
  will still hold if we translate all $t_1,\ldots,t_{k+1}$ by a common
  integer.  Hence we can define $t_{k+2}:=n_{k+1}+t_{k+1}$ and we have
  $n_{k+1}=t_{k+2}-t_{k+1}$.
\end{proof}

\begin{claim}
Vector $\boldt$ is reachable from $\bolds$ if and only if there exist
nonnegative integers $t_1,\ldots,t_{k+1}$ and vectors
$\boldz_1,\ldots,\boldz_{k+1} \in \mathbb{R}^d$ such that the
following equations hold:
\begin{equation}
\begin{array}{rcl}
\boldz_1&=& \boldx \\
\boldz_{i+1} &=& A_i^{t_{i+1}-t_i} \boldz_i \quad i\in\set{1,\ldots,k}\\
\boldz_{k+1} &=& \boldy \, .
\end{array}
\label{eq:star2}
\end{equation}
\label{claim2}
\end{claim}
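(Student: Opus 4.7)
The plan is to prove each direction by decoding a control sequence into atomic firings of the $V_i$'s and then tracking, block by block, the combined effect of the $A$-evolution and those firings.  The key preliminary observation is that the last $\R^k$-block evolves under $I_k$ while every element of $V_i$ contributes exactly $\bolde_i$ to that block; the requirement that the last block of $\boldt$ equal $\boldsymbol{1}$ therefore forces each $V_i$ to fire exactly once over the whole trajectory.  Hence any reachability witness is equivalent to the data, for each $i \in \set{1,\ldots,k}$, of a firing time $t_i \in \set{0,\ldots,T-1}$ together with a parameter $\boldz_i \in \R^d$.

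For the $(\Rightarrow)$ direction I will analyse each of the $k+1$ blocks in $\R^d$ separately.  Block $1$ evolves under $I_d$ and is touched only by $V_1$, so requiring it to equal $\boldzero$ at time $T$ pins down $\boldz_1 = \boldx$ (modulo the sign convention built into the pair $(\boldz,-\boldz)$).  For $i \in \set{1,\ldots,k-1}$, block $i+1$ evolves under $A_i$ and receives contributions from both $V_i$ and $V_{i+1}$; writing out its value at time $T$ and equating to $\boldzero$ yields, after cancellation and using invertibility of $A_i$, the identity $\boldz_{i+1}=A_i^{t_{i+1}-t_i}\boldz_i$, which is valid whether $t_i$ is smaller than, equal to, or larger than $t_{i+1}$.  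Finally, block $k+1$ evolves under $A_k$ and is touched only by $V_k$, so the target value $\boldy$ in that block translates to $\boldy = A_k^{T-1-t_k}\boldz_k$; setting $t_{k+1}:=T-1$ and $\boldz_{k+1}:=\boldy$ completes~(\ref{eq:star2}).

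For the $(\Leftarrow)$ direction, given nonnegative integers $t_1,\ldots,t_{k+1}$ and vectors $\boldz_1,\ldots,\boldz_{k+1}$ satisfying~(\ref{eq:star2}), I will take $T := \max_i t_i + 1$ and build a control sequence by applying, at each step $t \in \set{0,\ldots,T-1}$, the combined atomic control $\sum_{i:\,t_i=t} v_i$, with $v_i \in V_i$ using parameter $\boldz_i$.  A block-by-block check mirroring the forward analysis will show that the resulting state at time $T$ equals $\boldt$; any apparent mismatch between $T-1$ and $t_{k+1}$ can be absorbed by the freedom provided by Claim~\ref{claim1}, which lets us shift the $t_i$ so that $t_{k+1}$ is the largest.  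The main obstacle will be the careful sign- and order-bookkeeping when the $V_i$'s fire out of monotone time order: one must invoke invertibility of $A_i$ to interpret $A_i^{t_{i+1}-t_i}$ as a (possibly negative) matrix power, and then verify that the forward-time $A_i$-evolution together with the two injections from $V_i$ and $V_{i+1}$ really produces this power through the cancellation that zeroes out block $i+1$.
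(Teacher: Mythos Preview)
Your plan coincides with the paper's: use the $\R^k$-block to force exactly one firing of each $V_i$, then read off the equations from the remaining blocks using invertibility of the $A_i$.  The forward direction is correct and differs from the paper only in the harmless convention $t_{k+1}:=T-1$ versus the paper's $t_{k+1}:=T$.

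The backward-direction appeal to Claim~\ref{claim1} does not work.  The equations~(\ref{eq:star2}) pin down all the differences $t_{i+1}-t_i$, so whether $t_{k+1}$ is the largest of the $t_i$ is already determined by those differences; neither translating all the $t_i$ by a common constant nor re-applying Claim~\ref{claim1} can alter it.  In fact the $(\Leftarrow)$ implication, read literally, is false: take $k=1$, $A_1=2I_d$, $\boldx\neq\boldzero$, $\boldy=\tfrac12\boldx$.  Then $t_1=1,\ t_2=0,\ \boldz_1=\boldx,\ \boldz_2=\boldy$ satisfies~(\ref{eq:star2}), yet any run of the LTI system that zeroes block~$1$ leaves block~$2$ equal to $A_1^{m}\boldx=2^{m}\boldx$ for some $m\geq 0$, never $\tfrac12\boldx$.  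The paper's own proof glosses over the same point (it takes the run length to be $t_{k+1}$, tacitly assuming $t_i\leq t_{k+1}$).  The clean repair is to add the hypothesis $t_i\leq t_{k+1}$ for all $i\leq k$ to the statement of the claim; your forward argument already produces it.
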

\begin{proof}
  Suppose that $\boldt$ is reachable from $\bolds$.  Note that the
  final $k$ coordinates of $\bolds$ are all zero, while the
  corresponding coordinates of $\boldt$ are all one.  Since the block
  of matrix $A$ corresponding to these coordinates is the identity
  $I_{k}$, it follows that in going from $\bolds$ to $\boldt$ exactly
  one atomic control from each space $V_i$ was used for all $i\in
  \set{1,\ldots,k}$.  For each $i$ write $\boldz_i \in \R^d$ for the
    vector that determines the control in $V_i$ and let $t_i$ denote
    the number of steps after  which this control was applied.  Finally define
    $\boldz_{k+1}:=\boldy$ and let $t_{k+1}$ be the total number of steps
    going from $\bolds$ to $\boldt$.  We will show that with these definitions 
(\ref{eq:star2}) is satisfied.

Since the first block of $A$ is $I_d$, in order to reach a state of
$\mathcal{L}$ whose first block is $\boldsymbol{0}$ we must have
$\boldz_1=\boldx$.  In similar fashion, considering the $(i+1)$-st
block for $i=1,\ldots,k$, we have
\[A_i^{t_{k+1}-t_i} \boldz_i - A_i^{t_{k+1}-t_{i+1}}\boldz_{i+1} = 0\]
and hence, since $A_i$ is invertible,
$\boldz_{i+1} = A_i^{t_{i+1}-t_i} \boldz_i$.

Conversely, suppose that there exist nonnegative integers
$t_1,\ldots,t_{k+1}$ and vectors
$\boldz_1,\ldots,\boldz_{k+1} \in \R^d$ satisfying (\ref{eq:star2}).
Then $\boldt$ is reachable from $\bolds$ in $t_{k+1}$ steps by, for
all $i\in\set{1,\ldots,k}$, applying at time $t_i$ the atomic control
in $V_i$ that is determined by $\boldz_i$.  Indeed since we thereby
apply one atomic control for each $i\in\set{1,\ldots,k}$ we reach a
state with vector $\boldsymbol{1}$ in the final block.  The equations
in (\ref{eq:star2}) moreover guarantee that the reached state has
$\boldsymbol{0}$ in its first $k$ blocks and $\boldsymbol{y}$ in the
$(k+1)$-st block, i.e., the reached state is identical to $\boldt$.
% The first equation in
%(\ref{eq:star2}) ensures that this sequence of controls leads to
%$\boldsymbol{0}$ in the first block and likewise the remaining equations
%ensure that we end up with $\boldy$ in the 
%(\ref{eq:star2}) ensures that 
\end{proof}

We now complete the proof of Theorem~\ref{thm:undecidability} by combining the above
two claims.

Suppose that there exist integers $n_1,\ldots,n_{k}$ such that
$\prod\limits_{i=1}^k A_i^{n_{i}} \boldx = \boldy$. 
By Claim~\ref{claim1} there exists nonnegative integers
$t_1,\ldots,t_{k+1}$ satisfying
(\ref{eq:star}).
Then if we define $\boldz_i:= \prod_{j=1}^{i-1} A^{n_j} \boldx$ for
$i \in \set{1,\ldots,k+1}$ we have that Equation (\ref{eq:star2}) is
satisfied.  By Claim~\ref{claim2} it follows that $\boldt$ is reachable from
$\bolds$ in the LTI system $\mathcal L$.

Conversely suppose that $\boldt$ is reachable from $\bolds$.  By
Claim~\ref{claim2} the system (\ref{eq:star2}) has a solution.
Defining $n_i:=t_{i+1}-t_i$ for $i\in\set{1,\ldots,k}$ we have that
$\prod\limits_{i=1}^k A_i^{n_{i}} \boldx = \boldy$.

\end{proof}

\subsection{Positivity Hardness for Convex Control Sets}
Consider a sequence of integers
$\langle x_n : n \in \mathbb{N} \rangle$.  We say that such a sequence
satisfies a \emph{linear recurrence of order $d$} if there exist
$a_1,\ldots,a_d \in \mathbb{Z}$ such that
\[ x_n = \sum_{i=1}^d a_i x_{n-i}\] for all $n\geq d$.  Skolem's
Problem asks, given such a sequence (specified by a recurrence and its
initial values $x_0,\ldots,x_{d-1}$), whether $x_n=0$ for some $n$.
Likewise the Positivity Problem asks whether $x_n \geq 0$ for all $n$.
Decidability of Skolem's Problem and the Positivity Problem has been
open since the 1970s~\cite{SalomaaS78,RS94,Tao08,TUCS05}.  To date,
decidability of Skolem's Problem is known only for recurrences of
order at most 4~\cite{Ver85,MST84} and decidability of the Positivity
Problem is known only for recurrences of order at most 5~\cite{PP}.
There is a relatively straightforward reduction of Skolem's Problem to
the Positivity Problem (which does not preserve the order of
recurrences).

In this section we show that if the set of controls is a convex
polytope then the LTI reachability problem is as hard as the
Positivity Problem.  Instead of reducing from
the Positivity Problem directly, we give a reduction from the
\emph{Markov Reachability Problem}:
given a column-stochastic matrix $M \in \Q^{d \times d}$, determine
whether there exists $n \in \N$ such that
$(M^n)_{1,2} \geq \frac{1}{2}$.  A reduction from the Positivity
Problem to the Markov Reachability Problem has been given
in~\cite{MRP}.
\begin{theorem}
There is a reduction from the Positivity Problem to the reachability
problem for LTI systems whose sets of controls are compact convex
polytopes (with rational vertices).
\end{theorem}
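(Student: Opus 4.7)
My plan is to reduce the Markov Reachability Problem (MRP) to LTI reachability with polytopic controls, invoking the reduction from Positivity to MRP given in~\cite{MRP}. Given a column-stochastic matrix $M \in \Q^{d\times d}$, I would build an LTI system in dimension $d+1$ whose zero-control trajectory tracks $M^n \bolde_2$, with one extra coordinate acting as a detector for the event $(M^n)_{1,2} \geq \tfrac{1}{2}$.

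Concretely, I would set
\[
  A = \begin{pmatrix} M & \boldzero \\ \bolde_1^\top & 0 \end{pmatrix},
  \quad \bolds = (\bolde_2, 0),
  \quad \boldt = (\boldzero, \tfrac{1}{2}),
  \quad U = [-1, 0]^d \times [-\tfrac{1}{2}, 0],
\]
where $U$ is manifestly a compact convex polytope with rational vertices. Writing a control as $(\boldu_t, v_t) \in U$, the dynamics become $\boldx_{t+1} = M\boldx_t + \boldu_t$ and $y_{t+1} = (\boldx_t)_1 + v_t$. Under repeated zero controls, $\boldx_t = M^t \bolde_2$ and $y_{t+1} = (M^t)_{1,2}$.

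For the forward direction, suppose $(M^k)_{1,2} \geq \tfrac{1}{2}$ for some $k \geq 0$. I apply zero controls at steps $0, \ldots, k-1$, then at step $k$ apply $\boldu_k = -M^{k+1}\bolde_2$ and $v_k = \tfrac{1}{2} - (M^k)_{1,2}$. Both lie in $U$ because $M^{k+1}\bolde_2$ is a probability vector and $(M^k)_{1,2} \in [\tfrac{1}{2}, 1]$, and a direct computation gives $\boldx_{k+1} = \boldzero$ and $y_{k+1} = \tfrac{1}{2}$, so $\boldt$ is reached at step $k+1$.

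For the converse, the key lemma is that every reachable state satisfies $\boldx_t \leq M^t \bolde_2$ componentwise, which I would prove by induction: the base case is immediate, and the inductive step uses that $M$ has non-negative entries (so the componentwise order is preserved under $M$) together with $\boldu_t \leq \boldzero$. Hence whenever $\boldt$ is reached at step $n \geq 1$, one has $\tfrac{1}{2} = y_n = (\boldx_{n-1})_1 + v_{n-1} \leq (\boldx_{n-1})_1 \leq (M^{n-1})_{1,2}$, witnessing the MRP instance. The main design subtlety lies in choosing the polytope: $U$ must be rich enough to cancel $M\boldx_{n-1}$ and tune $y$ to exactly $\tfrac{1}{2}$ in the forward direction, yet sign-restricted enough for the monotonicity induction to go through. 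Restricting controls to the non-positive box achieves both, crucially exploiting the non-negativity of column-stochastic matrices.
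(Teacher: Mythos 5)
Your reduction is correct, and it follows the same outer strategy as the paper (reduce the Positivity Problem to the Markov Reachability Problem via~\cite{MRP}, then encode the latter as LTI reachability), but the gadget itself is genuinely different. The paper works in dimension $d+3$ with $A=\diag{M,0,0,1}$ and a specially shaped polytope $U=\{(-\boldx,y,z,z): \boldx\geq\boldsymbol{0},\ 0\leq y\leq x_1,\ \sum_i x_i=z\leq 1\}$: the duplicated coordinate (one erased by $A$, one preserved) forces every control except the last to be zero, so the trajectory is an exact simulation of $M^n\bolde_2$ and the constraint $y\leq x_1$ directly encodes $(M^n)_{1,2}\geq\tfrac12$. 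You instead work in dimension $d+1$ with the box $U=[-1,0]^d\times[-\tfrac12,0]$ and replace the ``unique nonzero control'' mechanism by the componentwise invariant $\boldx_t\leq M^t\bolde_2$, which holds for \emph{all} admissible trajectories because $M$ is entrywise nonnegative and the controls are non-positive; reaching $y_n=\tfrac12$ then forces $(M^{n-1})_{1,2}\geq\tfrac12$, while the forward direction is a direct computation with controls that lie in $U$ precisely because $M^{k+1}\bolde_2$ is a probability vector and $(M^k)_{1,2}\in[\tfrac12,1]$. Both arguments exploit stochasticity (the paper via the constraints $\boldx\geq\boldsymbol{0}$, $\sum_i x_i=z$; you via order preservation), but yours buys a smaller dimension and a simpler polytope at the price of the monotonicity lemma, whereas the paper's forcing gadget yields an exact step-by-step simulation that does not depend on any ordering argument. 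One cosmetic point: you should note explicitly that $\bolds\neq\boldt$, so any reaching time satisfies $n\geq 1$ and the expression $y_n=(\boldx_{n-1})_1+v_{n-1}$ is well defined; with that remark the argument is complete.
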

\begin{proof}
We give a polynomial-time reduction from the Markov Reachability
Problem to the problem at hand.  Given a column-stochastic matrix
$M \in \Q^{d \times d}$, we define an LTI system comprising a matrix
$A=\diag{M, 0, 0,1} \in \Q^{(d+3)\times (d+3)}$ and a compact convex
polytope
\begin{equation*}
U = \left\{ \left(-\boldx, y, z,z \right) :
    \boldx \geq \boldsymbol{0}, 0 \leq y \leq x_1, \mbox{ and }
    \sum_{i = 1}^d x_i = z \leq 1 \right\} \subseteq \R^{d+3} \, .
\end{equation*}
The initial state is $\bolds = (\boldsymbol{e}_2,0,0,0) \in \Q^{d+3}$
and target state $\boldt = (\boldsymbol{0}, \frac{1}{2}, 1,1) \in \Q^{d+3}$.

We argue that $\boldt$ is reachable from $\bolds$ if and only if there
exists $n \in \N$ such that $(M^n)_{1,2} \ge \frac{1}{2}$.

First, suppose that there exists $n \in \N$ such that
$(M^n)_{1,2} \geq \frac{1}{2}$.  Consider the sequence of controls
$\boldu_{0} = \cdots = \boldsymbol{u}_{n-2} = \boldsymbol{0}$
and $\boldu_{n-1}=(-M^n\boldsymbol{e}_2,\frac{1}{2},1,1)\in U$.
This sequence steers $\bolds$ to $\boldt$.

On the other hand, suppose that there exists a sequence of controls
$\boldu_0,\ldots,\boldu_{n-1}$ controlling $\bolds$ to $\boldt$.
Since the $(d+2)$-nd and $(d+3)$-rd coordinates of $\boldt$ are equal,
noting that the matrix $A$ erases coordinate $d+2$ but not $d+3$, it
follows that $\boldu_{n-1}$ is the only non-zero control, that is,
$\boldu_{0} = \cdots = \boldu_{n-2} = \boldsymbol{0}$.  Therefore at
time $n-1$ the state is $(M^{n-1}\boldsymbol{e}_2, 0, 0, 0)$, and the
only way to reach $\boldt$ in the remaining step is to take
$\boldu_{n-1} =
(-M^{n-1}\boldsymbol{e}_2,\frac{1}{2},1,1)\in U$ (otherwise
one of the first $d$ coordinates will be non-zero).  But this is only
possible if $(M^n)_{1,2} \geq \frac{1}{2}$.

To conclude the proof, observe that the LTI reachability
instance $(\bolds, A, U, \boldt)$ can be constructed in polynomial
time from $M$.
\end{proof}

\subsection{Skolem Hardness}
\label{sec:app-skolem}

In this section we consider the case that the set of controls is the
union of an affine subspace and the origin.  We show that in this
case the LTI reachability problem is as hard as Skolem's Problem.
%Skolem's problem
%is a long-standing open problem in logic and number theory, whose
%decidability has not yet been determined.

We will work with the following matricial version of Skolem's Problem,
rather than the formulation in terms of linear recurrences.  The two
versions are easily seen to be interreducible~\cite{RS94}.
\begin{definition}
Given a matrix $M \in \Q^{d \times d}$, \emph{Skolem's problem}
consists in determining whether there exists a number $n \in \N$ such
that $(M^n)_{1,2} = 0$.
\end{definition}

We will now show the following result:
\begin{theorem}
There is a reduction from Skolem's Problem to the reachability problem
for LTI systems whose set of controls is the unions of an affine subspace
(with a basis of rational vectors) and the origin.
\end{theorem}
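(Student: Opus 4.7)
The plan is to reduce the matricial Skolem Problem to this restricted version of the LTI reachability problem, in the same spirit as the Positivity hardness reduction above. Given $M\in\Q^{d\times d}$, I would build an LTI system of dimension $d+2$ whose transition matrix is the block-diagonal matrix $A:=\diag{M,0,1}$, whose source is $\bolds:=(\bolde_2,0,0)$, and whose target is $\boldt:=(\boldzero,1,1)$. The control set would be $U:=V\cup\set{\boldzero}$, where $V$ is the affine subspace
\[V:=\set{(\boldw,1,1)\in\R^{d+2} : \boldw\in\R^d,\ w_1=0},\]
which clearly admits a rational affine basis. The two auxiliary coordinates would play distinct roles: the last coordinate (preserved by $A$) would serve as a \emph{counter} that is incremented by $1$ whenever a $V$-control is applied, while the second-to-last coordinate (annihilated by $A$) would serve as a \emph{latch} that is set to $1$ by a $V$-control but erased at the next step.

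For correctness, I would first argue that if $\boldt$ is reached from $\bolds$ in $n$ steps, the target counter value $1$ forces exactly one non-zero control to be applied during the trajectory, and the target latch value $1$ further forces this unique non-zero control to be applied at step $n-1$. Writing it as $\boldu=(\boldw,1,1)\in V$ (so $w_1=0$), the state at time $n$ would equal $A^n\bolds+\boldu = (M^n\bolde_2+\boldw,\,1,\,1)$, which agrees with $\boldt$ exactly when $\boldw=-M^n\bolde_2$; combined with $w_1=0$, this is precisely the Skolem condition $(M^n)_{1,2}=0$. The converse direction is immediate: whenever $n\geq 1$ satisfies $(M^n)_{1,2}=0$, one reaches $\boldt$ in $n$ steps by applying zero controls at times $0,\dots,n-2$ and the control $\boldu:=(-M^n\bolde_2,1,1)\in V$ at time $n-1$.

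The key design choice, and the main obstacle to handle, is keeping the reduction sound without any invertibility assumption on $M$. A simpler construction using only the counter (and no latch) would yield an equation $M^{n-1-j}\boldw=-M^n\bolde_2$, where $j$ is the time at which the unique non-zero control is applied; extracting the Skolem condition from this equation requires cancelling $M^{n-1-j}$, which is only legitimate when $M$ is invertible and can otherwise produce spurious solutions $\boldw$ exploiting $\ker(M^{n-1-j})$. Introducing the latch would pin $j$ to $n-1$, so that $n-1-j=0$ and the equation collapses to $\boldw=-M^n\bolde_2$ unconditionally, giving a reduction that works uniformly for every $M\in\Q^{d\times d}$. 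Everything else---the polynomial-time construction and the rational affine basis for $V$---would be routine.
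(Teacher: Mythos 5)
Your reduction is correct and is essentially the paper's own: the paper likewise reduces from the matricial Skolem Problem via a block-diagonal transition matrix, a control set consisting of the origin together with an affine subspace whose first coordinate is forced to $0$, and an auxiliary-coordinate gadget that pins the unique non-zero control to the final step, so that hitting $\boldzero$ in the $M$-block yields $(M^n)_{1,2}=0$. The only (immaterial) difference is the gadget: the paper uses a single extra coordinate with transition factor $2$ (a control at time $j$ contributes $2^{n-1-j}$, so a final value of $1$ simultaneously enforces ``exactly one control'' and ``at the last step''), whereas you use two coordinates with eigenvalues $1$ and $0$ as counter and latch, in the style of the paper's Positivity-hardness construction; both handle non-invertible $M$ equally well.
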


\begin{proof}
We give a (polynomial-time) reduction from the Skolem Problem to the
problem at hand.  Given a matrix $M \in \Q^{d \times d}$ we define the
matrix $A = \diag{M,2} \in \Q^{(d+1) \times (d+1)}$ and the set of
admissible controls $\calP = \set{0} \cup \set{(0,x_2,\ldots,x_d,1)
  \mid x_2,\ldots,x_d \in \R}$, as well as the source $\bolds =
(\boldsymbol{e}_2,0) \in \Q^{d+1}$ and target $\boldt = (0,\ldots,0, 1) \in
\Q^{d+1}$.

We argue that that $\boldt$ is reachable from $\bolds$
if and only if 
there exists $n \in \N$ such that $(M^n)_{1,2} = 0$.

First, suppose that there exists $n \in \N$ such that $(M^n)_{1,2} = 0$. 
Consider the sequence of controls given by
$\boldu_{0} = \cdots = \boldsymbol{u}_{n-2} = \boldsymbol{0}$
and
\[
\boldu_{n-1} = \left( 0, - (M^n)_{2,1}, - (M^n)_{3,1},\ldots, - (M^n)_{n,1}, 1 \right).
\]
This sequence steers $\bolds$ to $\boldt$.

On the other hand, suppose there exists a sequence of controls
$\boldu_0,\ldots,\boldu_{n-1}$ steering $\bolds$ to~$\boldt$.  The
very last dimension implies that a vector in the second set of
controls can be played only once, and in the last step. So the
sequence of controls is zero vectors except at the last step.  Looking
at the first coordinate this implies that $(M^n)_{1,2} = 0$.
\end{proof}

\section{Decidability of Reachability for Simple LTI Systems}
\label{sec:simple}
Define an LTI system $\mathcal{L}=(A,U)$ to be \emph{simple} if:
\begin{enumerate}
\item the set of controls $U\subseteq\R^d$ is a bounded convex
  polytope that contains $\boldzero$ in its relative interior (\emph{i.e.} $0\in B\cap \Span(U)\subseteq U$ for some open ball $B$ around
  $\boldzero$);
\item the spectral radius of $A$ is less than one;
\item some positive power of $A$ has exclusively real spectrum
\end{enumerate}
The rationale behind those assumptions will be explained in the following paragraphs.

We show decidability of the following problem: given a simple LTI
system in dimension $d$ and a bounded convex polytope
$Q\subseteq \R^d$, determine whether $Q$ is reachable from
$\boldzero$.  We call this the \emph{reachability problem for
  simple LTI} systems.
The set of states reachable from $\boldzero$ is
$A^*(U) := \bigcup_{m=0}^\infty \sum_{i=0}^m A^i(U)$.  Thus the
reachability problem for simple LTI systems is equivalent to asking whether
$A^*(U)$ meets $Q$.

It is clear that the reachability problem for LTI systems is semi-decidable.
Fixing $n\in\N$, the problem of whether $\sum_{i=0}^n A^i(U)$
meets a given polytope $Q$ can straightforwardly be cast as a linear
program.  Iterating over all $n\in\N$ we thus have a
semi-decision procedure for reachability.

In the rest of this section we describe a semi-decision procedure for
non-reachability.  The idea is to use a hyperplane that separates
$A^*(U)$ and $Q$ as a certificate of non-reachability, as illustrated in
Figure~\ref{fig:separating_hyperplane}. The key
technical step here is to show that it suffices to consider
hyperplanes whose normal vectors have algebraic entries.  To establish
this we consider the cone of all hyperplanes that separate $A^*(U)$
and $Q$ and show that the extremal elements of this cone are
algebraic.  This reasoning heavily relies on the spectral assumptions
about the matrix $A$ in the definition of simple LTI systems.

The main difficulty in certifying non-reachability is that the convex
set $A^*(U)$ is difficult to describe in general. In particular, it
can have infinitely many faces, even though the control polyhedron
only has finitely many faces, as illustrated in
Figure~\ref{fig:convex-infinitely-many-faces}. This is, however, not
the only difficulty. Indeed, one might get the impression from
Figure~\ref{fig:convex-infinitely-many-faces} that the boundary of
$A^*(U)$ consists solely of (possibly countably many) facets, that is
faces of dimension $d-1$.  If that were true then one could one could
separate $A^*(U)$ from any point not in $A^*(U)$ by a hyperplane that
supports one of the facets of $A^*(U)$.  Moreover since 
for a simple LTI system any facet 
of $A^*(U)$ has 
a supporting hyperplane with a rational normal vector, this would mean
that for points outside $A^*(U)$ one can always find separating
hyperplanes with rational coefficients.
%write\footnote{Recall that any closed convex set can be written as the
%  intersection of its closed supporting half-spaces.}  the closure of
%$A^*(U)$ as the intersection of the supporting half-spaces of its
%facets.  Since any facet has a supporting hyperplane with a
%rational normal vector we would obtain a ``rational'' description
%of the reachable set. 
Unfortunately however there are cases, as illustrated in
Figure~\ref{fig:convex-lower-dim-face}, where part of the boundary of
the reachable set does not belong to any facet, but rather to some
lower-dimensional face. Such faces, by definition, do not usually have
a unique supporting hyperplane and it is not \emph{a priori} clear
that they admit a supporting hyperplane with algebraic coefficients.
The main technical result of this section
(Proposition~\ref{prop:one-dim}) shows that such algebraic supporting
hyperplanes can always be found.  It follows that for any point not in
$A^*(U)$ there is always a separating hyperplane whose normal has
algebraic-number coefficients.

\begin{figure*}
    \begin{center}
    \begin{tikzpicture}[
        fillcolor/.style={
            draw=none, pattern color=darkgreen, pattern=crosshatch dots
        },
        legend style/.style={
            draw=none, fill=white, inner sep=1.5pt
        },
        ptcolor/.style={draw=blue,circle,inner sep=\fracszpt,fill=blue},
        edgecolor/.style={blue,line width=\fracszedge}, normcolor/.style=red,
        qpoly/.style={draw=red,line width=\fracszedge,pattern color=red, pattern=crosshatch dots},
        qsep/.style={line width=0.5pt,gray},
        qtau/.style={line width=1pt,gray,->},
        conedge/.style={black,dashed}, coneinfedge/.style={black,dashed},
        conefill/.style={fill=gray, fill opacity=0.15},
        scale=1.5]
        \newcommand{\fracszpt}{0.7pt}
        \newcommand{\fracszedge}{0.7pt}
        \coordinate (vc0) at (-1, 0) {};
        \coordinate (vc1) at (0, 1) {};
        \coordinate (vc2) at (1, 0) {};
        \coordinate (vc3) at (1, -1/2) {};
        \coordinate (vc4) at (0, -2/3) {};
        \draw[edgecolor] (vc0) -- (vc1) -- (vc2) -- (vc3) -- (vc4) -- cycle;
        \node[ptcolor] at (vc0) {};
        \node[ptcolor] at (vc1) {};
        \node[ptcolor] at (vc2) {};
        \node[ptcolor] at (vc3) {};
        \node[ptcolor] at (vc4) {};
        \fill[fillcolor]  (vc0) -- (vc1) -- (vc2) -- (vc3) -- (vc4) --cycle;
        \draw (0,0) node[legend style] {$A^*(U)$};

        \draw[qpoly] (-1/2,1.2) -- (1/2,1.2) -- (1/2,1.8) -- (-1/2,1.8) -- cycle;
        \draw (0,1.5) node[legend style] {$Q$};
        \draw[qsep] (-1.3,1) -- (0.9,1) node[above, pos=0.2] {$H$};

        \draw[qpoly] (1,1/2) -- (1.8,2/3) -- (1.8,-1/2) -- (1, -1/2) -- cycle;
        \draw (1.4,0) node[legend style] {$Q$};
        \draw[qsep] (1,0.9) -- (1,-1.2) node[right, pos=0.8] {$H$};

        \draw[qpoly] (-1.5,0.1) -- (-1/2,-2/3) -- (-1.5,-2/3) -- cycle;
        \draw (-1.2,-0.45) node[legend style] {$Q$};
        \draw[qsep] (-1.5,1/3) -- (1/2,-1) node[above right, pos=0.1] {$H$};

        \begin{scope}[shift={(3,0)}]
        \coordinate (a) at (-0.1,1.1) {};
        \coordinate (vert) at (1.5,0) {};
        \coordinate (b) at (-0.1,-1.1) {};
        \coordinate (zero) at (0,0) {};
        \node[ptcolor] at (vert) {};
        \draw[edgecolor] (a) -- (vert) -- (b);
        \draw[fillcolor] (a) -- (vert) -- (b) -- cycle;
        \node[black,fill,circle,inner sep=1pt] at (zero) {} node[above] {$\boldzero$};
        \draw (0.8,0.05) node[legend style] {$A^*(U)$};

        \coordinate (qtl) at (2, 1.5) {};
        \coordinate (qbl) at (2.2, -1) {};
        \coordinate (qbr) at (3.5, -2/3) {};
        \coordinate (qtr) at (3.5, 2/3) {};
        \draw[qpoly] (qtl) -- (qbl) -- (qbr) -- (qtr) -- cycle;
        \draw (2.7,0) node[legend style] {$Q$};

        \newcommand{\tausz}{0.25cm}
        \draw[qsep] (1.5, 1.5) -- (1.5,-1.5);
        \draw[qtau] (1.5, 0) -- ++(\tausz,0) node[right] {$\boldtau$};
        \draw[qsep] ($(vert)!-1.5!(qbl)$) -- ($(vert)!1.1!(qbl)$);
        \coordinate (intersectbl) at ($(vert)!(zero)!(qbl)$) {};
        \draw[qtau] (intersectbl) -- ++($(zero)!\tausz!(intersectbl)$) node[above] {$\boldtau$};
        \draw[qsep] ($(vert)!-1!(qtl)$) -- ($(vert)!1.1!(qtl)$);
        \coordinate (intersecttl) at ($(vert)!(zero)!(qtl)$) {};
        \draw[qtau] (intersecttl) -- ++($(zero)!\tausz!(intersecttl)$) node[below] {$\boldtau$};

        \newcommand{\conesz}{4.3cm}
        \coordinate (conevertbl) at ($(zero)!\conesz!(intersectbl)$) {};
        \coordinate (coneverttl) at ($(zero)!\conesz!(intersecttl)$) {};
        \draw[conedge] (zero) -- (conevertbl);
        \draw[conedge] (zero) -- (coneverttl);
        \draw[coneinfedge] (conevertbl) edge[bend left=30] (coneverttl);
        % note: the following is bit tricky, we need edge[] to do the bend, but edge[] in draw/fill
        % create a new path that is unaffected by the style, thus we would need to use
        % every edge/.style but it still does not solve the problem because the resulting path
        % without the edge is incomplete and does not fill probably. Better use the lower-level
        % path directive
        \path[conefill] (zero) to (conevertbl) to[bend left=30] (coneverttl);
        \node[gray] at (3.2,1.5) {$\CONE(A^*(U),Q)$};
        \end{scope}
    \end{tikzpicture}
    \end{center}
    \caption{Example of separating hyperplanes $H$ between the reachable set $A^*(U)$ and various
        targets polytopes $Q$. If $Q$ and $A^*(U)$ are disjoint, we can always separate them by a supporting
        hyperplane of $A^\infty(U)$, the closure of $A^*(U)$. The set $\CONE(A^*(U),Q)$
        of all separators is crucial to understanding the situation. Note that the direction of such a
        supporting hyperplane may not always correspond to a facet of $A^\infty(U)$ or $Q$,
        in particular see Figure~\ref{fig:convex-lower-dim-face}.
        \label{fig:separating_hyperplane}}
\end{figure*}
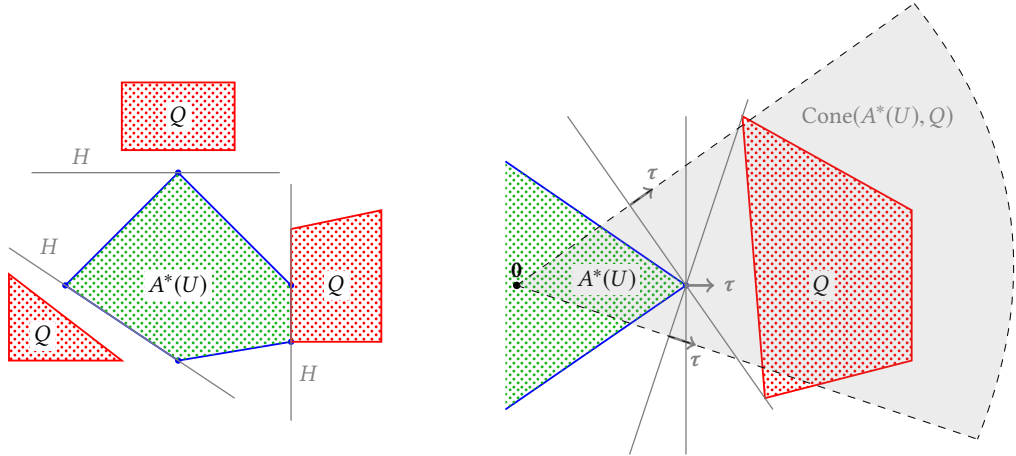

\begin{figure*}
    \begin{center}
    \begin{tikzpicture}[
            fillcolor/.style={
                draw=none, pattern color=darkgreen, pattern=crosshatch dots
            },
            legend style/.style={
                draw=none, fill=white, inner sep=1.5pt
            },
            ptcolor/.style={draw=blue,circle,inner sep=\fracszpt,fill=blue},
            edgecolor/.style={blue,line width=\fracszedge},
            normcolor/.style={red,line width=\fracsznorm,->},
            scale=0.75]
        \newcommand{\fracscale}{1}
        \newcommand{\fracszpt}{0.7pt}
        \newcommand{\fracszedge}{0.7pt}
        \newcommand{\fracsznorm}{0.7pt}
        \newcommand{\fracnormlen}{1}
        \newcommand{\fraccoord}{0pt}
        \coordinate (vc0) at (-2.99999993031*\fracscale,-2.99543268348*\fracscale) {};
\coordinate (vc1) at (-2.99999979092*\fracscale,-2.99086536696*\fracscale) {};
\coordinate (vc2) at (-2.99999937277*\fracscale,-2.98401439218*\fracscale) {};
\coordinate (vc3) at (-2.99999811832*\fracscale,-2.97373793*\fracscale) {};
\coordinate (vc4) at (-2.99999435497*\fracscale,-2.95832323675*\fracscale) {};
\coordinate (vc5) at (-2.99998306491*\fracscale,-2.93520119686*\fracscale) {};
\coordinate (vc6) at (-2.99994919474*\fracscale,-2.90051813703*\fracscale) {};
\coordinate (vc7) at (-2.99984758421*\fracscale,-2.84849354728*\fracscale) {};
\coordinate (vc8) at (-2.99954275263*\fracscale,-2.77045666266*\fracscale) {};
\coordinate (vc9) at (-2.99862825789*\fracscale,-2.65340133573*\fracscale) {};
\coordinate (vc10) at (-2.99588477366*\fracscale,-2.47781834533*\fracscale) {};
\coordinate (vc11) at (-2.98765432099*\fracscale,-2.21444385973*\fracscale) {};
\coordinate (vc12) at (-2.96296296296*\fracscale,-1.81938213134*\fracscale) {};
\coordinate (vc13) at (-2.88888888889*\fracscale,-1.22678953874*\fracscale) {};
\coordinate (vc14) at (-2.66666666667*\fracscale,-0.337900649854*\fracscale) {};
\coordinate (vc15) at (2.99999993031*\fracscale,2.99543268348*\fracscale) {};
\coordinate (vc16) at (-2.0*\fracscale,0.995432683479*\fracscale) {};
\coordinate (vc17) at (2.99999979092*\fracscale,2.99086536696*\fracscale) {};
\coordinate (vc18) at (2.99999937277*\fracscale,2.98401439218*\fracscale) {};
\coordinate (vc19) at (2.99999811832*\fracscale,2.97373793*\fracscale) {};
\coordinate (vc20) at (2.99999435497*\fracscale,2.95832323675*\fracscale) {};
\coordinate (vc21) at (2.99998306491*\fracscale,2.93520119686*\fracscale) {};
\coordinate (vc22) at (2.99994919474*\fracscale,2.90051813703*\fracscale) {};
\coordinate (vc23) at (2.99984758421*\fracscale,2.84849354728*\fracscale) {};
\coordinate (vc24) at (2.99954275263*\fracscale,2.77045666266*\fracscale) {};
\coordinate (vc25) at (2.99862825789*\fracscale,2.65340133573*\fracscale) {};
\coordinate (vc26) at (2.99588477366*\fracscale,2.47781834533*\fracscale) {};
\coordinate (vc27) at (2.98765432099*\fracscale,2.21444385973*\fracscale) {};
\coordinate (vc28) at (2.96296296296*\fracscale,1.81938213134*\fracscale) {};
\coordinate (vc29) at (2.88888888889*\fracscale,1.22678953874*\fracscale) {};
\coordinate (vc30) at (2.66666666667*\fracscale,0.337900649854*\fracscale) {};
\coordinate (vc31) at (2.0*\fracscale,-0.995432683479*\fracscale) {};
\coordinate (vc32) at (0.0*\fracscale,2.99543268348*\fracscale) {};
\coordinate (vc33) at (0.0*\fracscale,-2.99543268348*\fracscale) {};
\coordinate (barycenter) at (0.0,0.0);
\fill[fillcolor]  (vc15) -- (vc17) -- (vc18) -- (vc19) -- (vc20) -- (vc21) -- (vc22) -- (vc23) -- (vc24) -- (vc25) -- (vc26) -- (vc27) -- (vc28) -- (vc29) -- (vc30) -- (vc31) -- (vc33) -- (vc0) -- (vc1) -- (vc2) -- (vc3) -- (vc4) -- (vc5) -- (vc6) -- (vc7) -- (vc8) -- (vc9) -- (vc10) -- (vc11) -- (vc12) -- (vc13) -- (vc14) -- (vc16) -- (vc32) --cycle;
\draw[edgecolor,line width=\fracszedge]  (vc15) -- (vc17) -- (vc18) -- (vc19) -- (vc20) -- (vc21) -- (vc22) -- (vc23) -- (vc24) -- (vc25) -- (vc26) -- (vc27) -- (vc28) -- (vc29) -- (vc30) -- (vc31) -- (vc33) -- (vc0) -- (vc1) -- (vc2) -- (vc3) -- (vc4) -- (vc5) -- (vc6) -- (vc7) -- (vc8) -- (vc9) -- (vc10) -- (vc11) -- (vc12) -- (vc13) -- (vc14) -- (vc16) -- (vc32) --cycle;
\draw[normcolor] (-2.99999986062*\fracscale,-2.99314902522*\fracscale) -- ++(-0.999999999534*\fracnormlen,3.05175781108e-05*\fracnormlen);
\draw[normcolor] (-2.99999958185*\fracscale,-2.98743987957*\fracscale) -- ++(-0.999999998137*\fracnormlen,6.10351561363e-05*\fracnormlen);
\draw[normcolor] (-2.99999874555*\fracscale,-2.97887616109*\fracscale) -- ++(-0.999999992549*\fracnormlen,0.000122070311591*\fracnormlen);
\draw[normcolor] (-2.99999623665*\fracscale,-2.96603058337*\fracscale) -- ++(-0.999999970198*\fracnormlen,0.000244140617724*\fracnormlen);
\draw[normcolor] (-2.99998870994*\fracscale,-2.9467622168*\fracscale) -- ++(-0.999999880791*\fracnormlen,0.000488281191792*\fracnormlen);
\draw[normcolor] (-2.99996612982*\fracscale,-2.91785966694*\fracscale) -- ++(-0.999999523163*\fracnormlen,0.000976562034339*\fracnormlen);
\draw[normcolor] (-2.99989838947*\fracscale,-2.87450584215*\fracscale) -- ++(-0.999998092657*\fracnormlen,0.00195312127472*\fracnormlen);
\draw[normcolor] (-2.99969516842*\fracscale,-2.80947510497*\fracscale) -- ++(-0.999992370693*\fracnormlen,0.00390622019802*\fracnormlen);
\draw[normcolor] (-2.99908550526*\fracscale,-2.71192899919*\fracscale) -- ++(-0.999969483819*\fracnormlen,0.00781226159233*\fracnormlen);
\draw[normcolor] (-2.99725651578*\fracscale,-2.56560984053*\fracscale) -- ++(-0.999877952035*\fracnormlen,0.0156230930005*\fracnormlen);
\draw[normcolor] (-2.99176954733*\fracscale,-2.34613110253*\fracscale) -- ++(-0.999512076087*\fracnormlen,0.0312347523777*\fracnormlen);
\draw[normcolor] (-2.97530864198*\fracscale,-2.01691299553*\fracscale) -- ++(-0.998052578483*\fracnormlen,0.0623782861552*\fracnormlen);
\draw[normcolor] (-2.92592592593*\fracscale,-1.52308583504*\fracscale) -- ++(-0.992277876714*\fracnormlen,0.124034734589*\fracnormlen);
\draw[normcolor] (-2.77777777778*\fracscale,-0.782345094299*\fracscale) -- ++(-0.970142500145*\fracnormlen,0.242535625036*\fracnormlen);
\draw[normcolor] (-2.33333333333*\fracscale,0.328766016812*\fracscale) -- ++(-0.894427191*\fracnormlen,0.4472135955*\fracnormlen);
\draw[normcolor] (2.99999986062*\fracscale,2.99314902522*\fracscale) -- ++(0.999999999534*\fracnormlen,-3.05175781108e-05*\fracnormlen);
\draw[normcolor] (2.99999958185*\fracscale,2.98743987957*\fracscale) -- ++(0.999999998137*\fracnormlen,-6.10351561363e-05*\fracnormlen);
\draw[normcolor] (2.99999874555*\fracscale,2.97887616109*\fracscale) -- ++(0.999999992549*\fracnormlen,-0.000122070311591*\fracnormlen);
\draw[normcolor] (2.99999623665*\fracscale,2.96603058337*\fracscale) -- ++(0.999999970198*\fracnormlen,-0.000244140617724*\fracnormlen);
\draw[normcolor] (2.99998870994*\fracscale,2.9467622168*\fracscale) -- ++(0.999999880791*\fracnormlen,-0.000488281191792*\fracnormlen);
\draw[normcolor] (2.99996612982*\fracscale,2.91785966694*\fracscale) -- ++(0.999999523163*\fracnormlen,-0.000976562034339*\fracnormlen);
\draw[normcolor] (2.99989838947*\fracscale,2.87450584215*\fracscale) -- ++(0.999998092657*\fracnormlen,-0.00195312127472*\fracnormlen);
\draw[normcolor] (2.99969516842*\fracscale,2.80947510497*\fracscale) -- ++(0.999992370693*\fracnormlen,-0.00390622019802*\fracnormlen);
\draw[normcolor] (2.99908550526*\fracscale,2.71192899919*\fracscale) -- ++(0.999969483819*\fracnormlen,-0.00781226159233*\fracnormlen);
\draw[normcolor] (2.99725651578*\fracscale,2.56560984053*\fracscale) -- ++(0.999877952035*\fracnormlen,-0.0156230930005*\fracnormlen);
\draw[normcolor] (2.99176954733*\fracscale,2.34613110253*\fracscale) -- ++(0.999512076087*\fracnormlen,-0.0312347523777*\fracnormlen);
\draw[normcolor] (2.97530864198*\fracscale,2.01691299553*\fracscale) -- ++(0.998052578483*\fracnormlen,-0.0623782861552*\fracnormlen);
\draw[normcolor] (2.92592592593*\fracscale,1.52308583504*\fracscale) -- ++(0.992277876714*\fracnormlen,-0.124034734589*\fracnormlen);
\draw[normcolor] (2.77777777778*\fracscale,0.782345094299*\fracscale) -- ++(0.970142500145*\fracnormlen,-0.242535625036*\fracnormlen);
\draw[normcolor] (2.33333333333*\fracscale,-0.328766016812*\fracscale) -- ++(0.894427191*\fracnormlen,-0.4472135955*\fracnormlen);
\draw[normcolor] (1.49999996515*\fracscale,2.99543268348*\fracscale) -- ++(0.0*\fracnormlen,1.0*\fracnormlen);
\draw[normcolor] (-1.0*\fracscale,1.99543268348*\fracscale) -- ++(-0.707106781187*\fracnormlen,0.707106781187*\fracnormlen);
\draw[normcolor] (-1.49999996515*\fracscale,-2.99543268348*\fracscale) -- ++(0.0*\fracnormlen,-1.0*\fracnormlen);
\draw[normcolor] (1.0*\fracscale,-1.99543268348*\fracscale) -- ++(0.707106781187*\fracnormlen,-0.707106781187*\fracnormlen);
\node[ptcolor] (v0) at (vc0) {};
\node[ptcolor] (v1) at (vc1) {};
\node[ptcolor] (v2) at (vc2) {};
\node[ptcolor] (v3) at (vc3) {};
\node[ptcolor] (v4) at (vc4) {};
\node[ptcolor] (v5) at (vc5) {};
\node[ptcolor] (v6) at (vc6) {};
\node[ptcolor] (v7) at (vc7) {};
\node[ptcolor] (v8) at (vc8) {};
\node[ptcolor] (v9) at (vc9) {};
\node[ptcolor] (v10) at (vc10) {};
\node[ptcolor] (v11) at (vc11) {};
\node[ptcolor] (v12) at (vc12) {};
\node[ptcolor] (v13) at (vc13) {};
\node[ptcolor] (v14) at (vc14) {};
\node[ptcolor] (v15) at (vc15) {};
\node[ptcolor] (v16) at (vc16) {};
\node[ptcolor] (v17) at (vc17) {};
\node[ptcolor] (v18) at (vc18) {};
\node[ptcolor] (v19) at (vc19) {};
\node[ptcolor] (v20) at (vc20) {};
\node[ptcolor] (v21) at (vc21) {};
\node[ptcolor] (v22) at (vc22) {};
\node[ptcolor] (v23) at (vc23) {};
\node[ptcolor] (v24) at (vc24) {};
\node[ptcolor] (v25) at (vc25) {};
\node[ptcolor] (v26) at (vc26) {};
\node[ptcolor] (v27) at (vc27) {};
\node[ptcolor] (v28) at (vc28) {};
\node[ptcolor] (v29) at (vc29) {};
\node[ptcolor] (v30) at (vc30) {};
\node[ptcolor] (v31) at (vc31) {};
\node[ptcolor] (v32) at (vc32) {};
\node[ptcolor] (v33) at (vc33) {};
\draw (barycenter) node[legend style] {$A^*(U)$};
        \draw (7,2) node {$A=\begin{bmatrix}\tfrac{1}{3}&0\\0&\tfrac{2}{3}\end{bmatrix}$};
        \begin{scope}[shift={(7,-2)}]
        \input{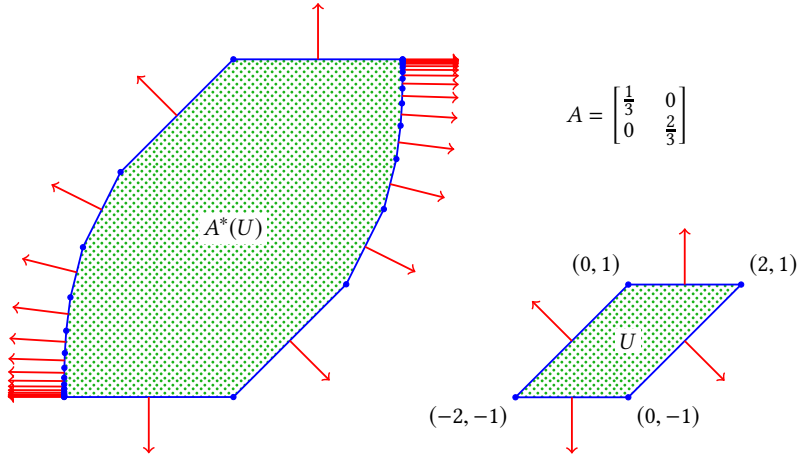}
        \end{scope}
    \end{tikzpicture}
    \end{center}
    \caption{Example of a simple LTI system where the reachable set $A^*(U)$ is a convex set with infinitely
        many faces. Note that since $A^*(U)$ is open, we represented its closure $A^\infty(U)$.
        \label{fig:convex-infinitely-many-faces}}
\end{figure*}

\begin{figure*}
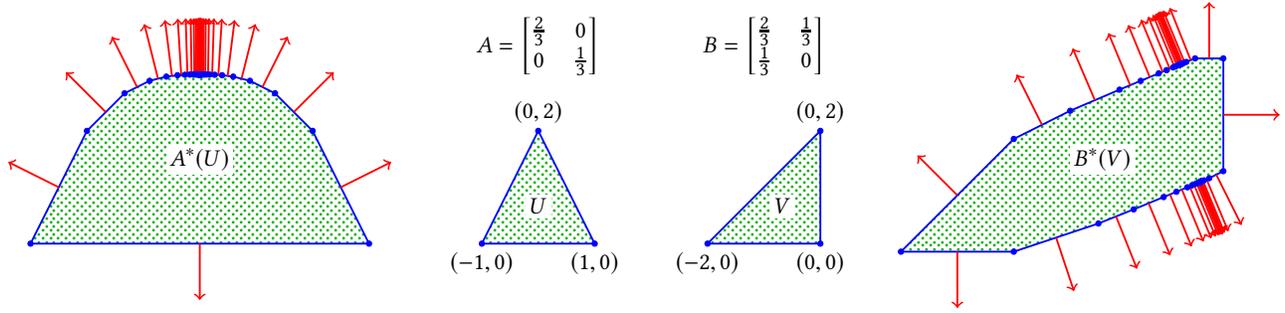

    \begin{center}
    \begin{tikzpicture}[
            fillcolor/.style={
                draw=none, pattern color=darkgreen, pattern=crosshatch dots
            },
            legend style/.style={
                draw=none, fill=white, inner sep=1.5pt
            },
            ptcolor/.style={draw=blue,circle,inner sep=\fracszpt,fill=blue},
            edgecolor/.style={blue,line width=\fracszedge},
            normcolor/.style={red,line width=\fracsznorm,->},
            scale=0.75]
        \newcommand{\fracscale}{1}
        \newcommand{\fracszpt}{0.7pt}
        \newcommand{\fracszedge}{0.7pt}
        \newcommand{\fracsznorm}{0.7pt}
        \newcommand{\fracnormlen}{1}
        \newcommand{\fraccoord}{0pt}
        \begin{scope}
            \input{lti-lowerdim-faces-reach.tex}
            \draw (6,3.5) node {$A=\begin{bmatrix}\tfrac{2}{3}&0\\0&\tfrac{1}{3}\end{bmatrix}$};
            \begin{scope}[shift={(6,0)}]
            % remark: I manually modified the file from the generated code to adjust the placement
            % of the vertices coordinates
            \input{lti-lowerdim-faces-U.tex}
            \end{scope}
        \end{scope}
        \begin{scope}[shift={(11,0)}]
            \input{lti-lowerdim2-faces-U.tex}
            \draw (-1,3.5) node {$B=\begin{bmatrix}\tfrac{2}{3}&\tfrac{1}{3}\\\tfrac{1}{3}&0\end{bmatrix}$};
            \begin{scope}[shift={(6,1)}]
            % remark: I manually modified the file from the generated code to adjust the placement
            % of the vertices coordinates
            \input{lti-lowerdim2-faces-reach.tex}
            \end{scope}
        \end{scope}
    \end{tikzpicture}
    \end{center}
    \caption{Example of simple LTI system in which an extreme point does not belong to a facet: the point $x=(0,3)$
        ``at the top'' of $A^*(U)$, on the left, does not belong to any of the infinitely many edges that
        ``converge'' to it. In particular $x$ is extreme and exposed but it is not a vertex.
        It can be separated from $A^*(U)$ by the hyperplane $y\geqslant 3$ which has rational coefficients.
        However this is not always the case: $B^*(V)$ has two extreme points that do not belong to any
        facet and have rational coordinates, but whose (unique) separating hyperplane requires the use  of algebraic irrationals. In particular, the normal to the separating hyperplanes is,
        up to sign, one the eigenvectors of $B$.
        \label{fig:convex-lower-dim-face}}
\end{figure*}

The following proposition identifies some simplifying assumptions that
can be made without loss of generality for analysing reachability.

\begin{proposition}
  The reachability problem for a simple LTI system $\mathcal{L}=(A,U)$ can be
  reduced to the special case in which it is assumed that all
  eigenvalues of $A$ are real and strictly positive and that $A^*(U)$ is
  full dimensional.
\label{prop:WLOG}
\end{proposition}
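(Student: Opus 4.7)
The plan is to perform the reduction in two stages. First, pass to a suitably large power of $A$ to arrange for the eigenvalues to be real and non-negative and for the nilpotent part of $A$ to vanish on the generalised $0$-eigenspace. Then decompose the state space along the generalised $0$-eigenspace and restrict to the span of the reach set inside the complementary invariant subspace; this yields strictly positive real eigenvalues and full-dimensionality simultaneously.

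For stage one, pick $N$ such that $A^N$ has real spectrum (by assumption (iii)) and set $M := 2Nd$ and $\tilde A := A^M$. Then $\tilde A = (A^N)^{2d}$ has real non-negative spectrum, and since $M \geq d$ we have $\tilde A|_{E_0} = 0$, where $E_0 := \ker A^d$ is the generalised $0$-eigenspace of $A$. Define $\tilde U := \sum_{i=0}^{M-1} A^i U$; this is a bounded convex polytope with $\boldzero$ in its relative interior, being a Minkowski sum of such polytopes. A direct calculation using $\boldzero \in U$ shows that the reachable sets from $\boldzero$ in $(A,U)$ and $(\tilde A, \tilde U)$ coincide, so reachability of the target $Q$ is preserved. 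Since $\rho(\tilde A) = \rho(A)^M < 1$ and the other simple-LTI conditions persist, $(\tilde A, \tilde U)$ is again a simple LTI system.

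Now decompose $\R^d = E_0 \oplus E_{\neq 0}$, where $E_{\neq 0}$ is the sum of generalised eigenspaces for non-zero eigenvalues of $\tilde A$, and let $\pi \colon \R^d \to E_{\neq 0}$ be the projection along $E_0$. Set $B := \tilde A|_{E_{\neq 0}}$, which is invertible and has strictly positive real spectrum. Using $\tilde A^i|_{E_0} = 0$ for $i \geq 1$, unrolling the recurrence yields
\[
\bigcup_{m \geq 1} \sum_{i=0}^{m-1} \tilde A^i \tilde U \;=\; \tilde U + B^\ast(B\pi(\tilde U)),
\]
where $B^\ast$ denotes the reach operator of the corresponding LTI system in $E_{\neq 0}$. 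Consequently, $Q$ is reachable from $\boldzero$ in $(\tilde A, \tilde U)$ if and only if the polytope $Q' := (Q - \tilde U) \cap E_{\neq 0}$ is reachable from $\boldzero$ in the auxiliary system $(B, B\pi(\tilde U))$. The control polytope $B\pi(\tilde U)$ lies in $E_{\neq 0}$ and contains $\boldzero$ in its relative interior, since $\pi$ and $B$ are linear maps that preserve relative interiors (the latter because $B$ is invertible on $E_{\neq 0}$).

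Finally, let $W := \Span(B^\ast(B\pi(\tilde U)))$, a $B$-invariant subspace. Then $B|_W$ inherits a real, strictly positive spectrum (a subset of that of $B$), remains invertible, and has spectral radius less than one; the control polytope $B\pi(\tilde U) \subseteq W$ still has $\boldzero$ in its relative interior, and by construction the reach set of $(B|_W, B\pi(\tilde U))$ from $\boldzero$ is full-dimensional in $W$. Because every trajectory from $\boldzero$ stays in $W$, reachability of $Q'$ in $(B, B\pi(\tilde U))$ coincides with reachability of the polytope $Q' \cap W$ in $(B|_W, B\pi(\tilde U))$, giving the desired reduction. The main obstacle in turning this outline into a complete proof is the reach-set identity and the chain of reachability equivalences in stage two: one must carefully unroll the recurrence exploiting that $\tilde A$ annihilates $E_0$, verify that all polytope and spectral conditions defining a simple LTI system are preserved under every step, and confirm that the successive target transformations $Q \mapsto (Q-\tilde U) \cap E_{\neq 0} \mapsto Q' \cap W$ preserve the polytope structure.
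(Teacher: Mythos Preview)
Your proposal is correct and follows essentially the same route as the paper. The paper also (i) passes to a power $A^M$ with nonnegative real spectrum and replaces the control set by $\sum_{i=0}^{M-1}A^i(U)$, (ii) splits $\R^d=V_0\oplus V_1$ into nilpotent and invertible parts, obtaining the new target $(Q-\sum_{i=0}^{d-1}A^i(U))\cap V_1$ and control set $A^d(U)\subseteq V_1$, and (iii) restricts to the least $A$-invariant subspace containing the control set. Your only deviation is cosmetic: by taking $M=2Nd$ you arrange $\tilde A|_{E_0}=0$ already at stage one, so your reach-set identity $\tilde A^*(\tilde U)=\tilde U+B^*(B\pi(\tilde U))$ replaces the paper's splitting formula $\sum_{i=0}^n A^i\boldu_i=\sum_{i=0}^{d-1}A^i\boldu_i+\sum_{i=0}^{n-d}A^iA^d\boldu_{i+d}$, but the two are the same computation with different bookkeeping. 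The verifications you flag as obstacles (polytope and spectral conditions persisting, the target transformations remaining bounded polytopes) are routine; in fact you are slightly more explicit about them than the paper is.
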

\begin{proof}%[Proof of Proposition~\ref{prop:WLOG}]
  We first reduce to the case in which all eigenvalues of $A$ are real
  and nonnegative.  Let $M$ be the least common multiple of the orders\footnote{A root of unity of order $r$
    has degree $\varphi(r) \geq \frac{r}{283 \log \log r}$ as an
    algebraic number, where $\varphi(r)$ is Euler's function (see,
    e.g.,~\cite{TUCS05}).  From this fact it is straightforward to
    compute an upper bound on $M$.}
  of the roots of unity $\frac{\lambda}{|\lambda|}$ for $\lambda$ a
  non-zero eigenvalue of $A$.  Then $A^M$ has spectrum included
  in the nonnegative real line.  But $Q \subseteq\R^d$ is
  reachable in the original LTI iff it is reachable in the LTI with
  transition matrix $A^M$ and set of controls
  $\sum_{i=0}^{M-1} A^i(U)$.

  We next give a reduction to the case that $A$ is invertible.  Write
  $\R^d = V_0\oplus V_1$, where $V_0,V_1\subseteq \R^d$
  are $A$-invariant subspaces such that $A$ is nilpotent on $V_0$ (of
  index at most $d$) and invertible on $V_1$.  
Then for all
  $n\geq d$ and $\boldu_0,\ldots,\boldu_n \in U$ we have
\begin{gather}
 \sum_{i=0}^n A^i \boldu_i = \sum_{i=0}^{d-1} A^i\boldu_i 
+ \sum_{i=0}^{n-d} A^iA^d\boldu_{i+d} \, . 
\label{eq:split}
\end{gather}
Suppose that $V_1$ has dimension $d_1$.  Pick a basis of $V_1$ and
with respect to this basis let $U'\subseteq \R^{d_1}$
represent the set $A^d(U) \subseteq V_1$, let
$Q'\subseteq \R^{d_1}$ represent
$(Q - \sum_{i=0}^{d-1} A^i (U))\cap V_1$, and let $A'$ represent the
linear transformation on $V_1$ induced by $A$.  From (\ref{eq:split})
it is clear that $Q$ is reachable in $\mathcal{L}$ if and only if $Q'$
is reachable in the LTI system $\mathcal{L}':=(A',U')$, in which $A'$
is an invertible matrix.

Now suppose that $\mathcal{L}=(A,U)$ a simple LTI system in dimension
$d$ with $A$ invertible.  Let $V\subseteq\R^d$ be the least
$A$-invariant subspace that contains $U$.  Then restricting $A$ to $V$
one obtains an LTI system in which $A^*(U)$ is full dimensional (with
the restriction of $A$ to $V$ remaining invertible).

\end{proof}

In the rest of this section we will assume that all simple LTI systems
are such that $A^*(U)$ is full dimensional and all eigenvalues of $A$
are strictly positive.

\begin{proposition}\label{prop:closure}
  Given a simple LTI system $\mathcal{L}=(A,U)$, the reachable set
  $A^*(U)$ is a convex open subset of $\R^d$ whose closure is
  $A^\infty(U) := \sum_{i=0}^\infty A^i(U)$.
\end{proposition}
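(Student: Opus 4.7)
The plan is to prove the three statements---convexity, openness in $\R^d$, and the identification of the closure with $A^\infty(U)$---separately, making essential use of the simplifying assumptions (positive real spectrum of $A$, $A^*(U)$ full dimensional, $\rho(A)<1$) established in Proposition~\ref{prop:WLOG}.

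For convexity, I would write $A^*(U) = \bigcup_{m \geq 0} S_m$ where $S_m := \sum_{i=0}^m A^i(U)$. Each $S_m$ is a Minkowski sum of convex sets, hence convex. Since $\boldzero \in U$, we have $S_m \subseteq S_{m+1}$, so $A^*(U)$ is a nested union of convex sets and therefore convex.

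For openness, fix $\boldx \in A^*(U)$, say $\boldx \in S_m$. The subspaces $\sum_{j=0}^N A^j(\Span U)$ form an increasing chain in $\R^d$ that must stabilise; by the full-dimensionality assumption the limit is $\R^d$, so there is some $N$ with $\sum_{j=0}^N A^j(\Span U) = \R^d$. Since $\boldzero \in \mathrm{relint}(U)$, and because in finite dimension $\mathrm{relint}(A^j(U)) = A^j(\mathrm{relint}(U))$ and $\mathrm{relint}(\sum_j C_j) = \sum_j \mathrm{relint}(C_j)$ for convex sets, $\boldzero$ lies in the interior of $T := \sum_{j=0}^N A^j(U)$ in $\R^d$. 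Thus $T$ contains some open ball $B_\epsilon(\boldzero)$; applying $A^{m+1}$ (which is invertible, since all eigenvalues are strictly positive by the reduction) and translating by $\boldx$, we find that $\boldx + A^{m+1}(T) = \boldx + \sum_{j=0}^N A^{m+1+j}(U) \subseteq S_{m+1+N} \subseteq A^*(U)$ contains the open neighbourhood $\boldx + A^{m+1}(B_\epsilon(\boldzero))$ of $\boldx$.

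For the closure, the inclusion $A^\infty(U) \subseteq \overline{A^*(U)}$ is immediate once one observes that the series $\sum_{i=0}^\infty A^i \boldu_i$ converges absolutely for any $(\boldu_i) \in U^\N$: indeed $\rho(A) < 1$ and boundedness of $U$ yield $\norm{A^i \boldu} \leq C r^i$ uniformly for some $r < 1$, so every such sum is the limit of its partial sums, which lie in $A^*(U)$. For the reverse inclusion, let $\boldy = \lim_n \boldy_n$ with $\boldy_n = \sum_{i=0}^{m_n} A^i \boldu_i^{(n)}$ and $\boldu_i^{(n)} \in U$, and pad each sequence by $\boldzero \in U$ past index $m_n$. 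Since $U$ is compact, $U^\N$ is compact and metrizable in the product topology, so a diagonal extraction produces $\boldu_i^* \in U$ with $\boldu_i^{(n)} \to \boldu_i^*$ coordinatewise along a subsequence. A standard tail estimate using the geometric bound on $\norm{A^i}$ then shows $\boldy_n \to \sum_{i=0}^\infty A^i \boldu_i^* \in A^\infty(U)$, so $\boldy \in A^\infty(U)$.

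The only genuinely delicate step is openness: the relative-interior hypothesis on $U$ alone does not give interior points in $\R^d$, and one must really combine the full-dimensionality reduction with the stability of relative interiors under linear images and Minkowski sums. The other two parts are routine convex-analytic and compactness arguments.
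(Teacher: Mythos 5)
Your proposal is correct. The convexity argument (nested union of Minkowski sums, using $\boldzero\in U$) and the openness argument are essentially the paper's: the paper also passes through the stabilising chain of spans and full dimensionality of a finite partial sum $\sum_{i=0}^{d-1}A^i(U)$, and then pushes a full-dimensional neighbourhood of $\boldzero$ forward by the invertible map $A^{n+1}$ and translates it to the given reachable point; the only cosmetic difference is that you justify $\boldzero\in\mathrm{int}\bigl(\sum_{j=0}^N A^j(U)\bigr)$ via the relative-interior calculus ($\mathrm{relint}$ commutes with linear images and Minkowski sums), whereas the paper observes that the partial sum contains a full-dimensional set symmetric about the origin. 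Where you genuinely diverge is the closure statement. The paper proves density of $A^*(U)$ in $A^\infty(U)$ and then gets closedness of $A^\infty(U)$ abstractly: $A^\infty(U)$ is the unique fixed point of the contraction $X\mapsto A(X)+U$ on bounded sets under the Hausdorff metric, and since compact sets are preserved and complete, the fixed point is compact. You instead prove the two inclusions directly, the nontrivial one ($\overline{A^*(U)}\subseteq A^\infty(U)$) by padding with $\boldzero$, extracting a coordinatewise-convergent subsequence in the compact space $U^{\N}$, and a geometric tail estimate from $\rho(A)<1$. Both are sound; your route is more elementary and self-contained, while the paper's fixed-point argument delivers compactness of $A^\infty(U)$ explicitly (a fact it leans on later when invoking the separating-hyperplane theorem for compact convex sets). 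Your argument in fact yields the same conclusion with one extra remark—$A^\infty(U)$ is the image of the compact set $U^{\N}$ under the continuous map $(\boldu_i)\mapsto\sum_i A^i\boldu_i$—which would be worth stating if you wanted your proof to substitute for the paper's downstream as well.
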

\begin{proof}
Since $\boldzero\in U$ it holds that 
\[ A^0(U) \, \subseteq \,  A^0(U)+A^1(U) \, \subseteq A^0(U)+A^1(U)+A^2(U) \, \subseteq \, \cdots \]
is an increasing chain of convex subsets of $\R^d$ and hence the union of the chain
$A^*(U)$ is convex.

We next show that $A^*(U)$ is open.  Now the increasing sequence of
vector spaces $V_0 \subseteq V_1 \subseteq \ldots$, where
$V_j:=\Span\left(\sum_{i=0}^j A^i(U)\right)$, stabilises in at most
$d$ steps.  Thus from the assumption that $A^*(U)$ is full dimensional
we conclude that $\sum_{i=0}^{d-1} A^i(U)$ is already full
dimensional.  Furthermore, by Assumption 1 in the definition of Simple
LTI system, we in fact have that $\sum_{i=0}^{d-1} A^i(U)$ contains a
full-dimensional subset that is symmetric around $\boldsymbol{0}$ and
hence contains $\boldsymbol{0}$ in its interior.  Now consider a
typical element $\sum_{i=0}^n A^i \boldu_i \in A^*(U)$.  Since $A$ is
invertible we have that
$A^{n+1}(\sum_{i=0}^{d-1} A^i(U)) = \sum_{i={n+1}}^{d+n} A^i(U)$ is
also full dimensional and contains $\boldzero$ in its interior.  We
conclude that $\sum_{i=0}^n A^i \boldu_i$ lies in the interior of
$\sum_{i=0}^n A^i \boldu_i + \sum_{i={n+1}}^{d+n} A^i(U)$, which
itself is contained in the interior of $A^*(U)$.

From the fact that the spectral radius of $A$ is strictly less than
one it easily follows that $A^*(U)$ is dense in $A^\infty(U)$, so it
remains to observe that $A^\infty(U)$ is closed.  But $A^\infty(U)$ is
a fixed point of the contractive self map $F: X \mapsto A(X)+U$ on the
metric space of all bounded subsets of $\R^d$ under the
Hausdorff metric.  Such a self-map has a unique fixed point.
Moreover, since the collection of compact subsets of $\R^d$ is
complete under the Hausdorff metric and is preserved by $F$, we
conclude that $A^\infty(U)$ is compact and thus closed.
\end{proof}

The following is a (version of a) classical result of convex analysis:
\begin{theorem}[Theorem of the Separating Hyperplane]\label{thm:separating}
  Let $C$ and $D$ be compact convex subsets of $\R^d$.  Then
  $\mathrm{int}(C) \cap D=\emptyset$ if and only if there exists
  $\boldtau\in\R^d$ and $b\in\R$ such that
  $\Angle{\boldx,\boldtau}\leq b$ for all
  $\boldx \in C$ and
  $\Angle{\boldx,\boldtau}\geq b$ for all
  $\boldx\in D$.
\end{theorem}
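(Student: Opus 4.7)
The plan is to reduce this to the standard geometric Hahn--Banach theorem in $\R^d$, which says that every nonempty open convex subset of $\R^d$ not containing a given point admits a nonzero linear functional weakly separating the two. The nontrivial implication is ($\Rightarrow$); I would handle it first and then dispatch the converse.

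For ($\Rightarrow$), assume $\mathrm{int}(C)\cap D=\emptyset$. In the generic case $\mathrm{int}(C)\neq\emptyset$ I would form the Minkowski difference
\[
K:=\mathrm{int}(C)-D=\set{\boldx-\boldu\colon \boldx\in\mathrm{int}(C),\ \boldu\in D}.
\]
Since $\mathrm{int}(C)$ is open, $K$ is a union of translates of $\mathrm{int}(C)$ and hence open; since $\mathrm{int}(C)$ and $-D$ are convex, so is their sum $K$; and $K$ avoids $\boldzero$ precisely by the hypothesis. The geometric Hahn--Banach theorem therefore produces $\boldtau\in\R^d\setminus\set{\boldzero}$ with $\dotprod{\boldy}{\boldtau}\le 0$ for every $\boldy\in K$, equivalently $\dotprod{\boldx}{\boldtau}\le\dotprod{\boldu}{\boldtau}$ for all $\boldx\in\mathrm{int}(C)$ and $\boldu\in D$. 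Setting $b:=\sup_{\boldx\in\mathrm{int}(C)}\dotprod{\boldx}{\boldtau}$ yields the desired inequalities on $\mathrm{int}(C)$ and on $D$, and the inequality on $\mathrm{int}(C)$ extends to all of $C$ by continuity via the standard convex-analytic identity $C=\overline{\mathrm{int}(C)}$, valid for any closed convex set with nonempty interior (and $C$ is even compact here).

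For ($\Leftarrow$) with $\boldtau\neq\boldzero$, any $\boldx\in\mathrm{int}(C)$ admits $\varepsilon>0$ with $\boldx+\varepsilon\boldtau\in C$, so $\dotprod{\boldx}{\boldtau}+\varepsilon\norm{\boldtau}^2\le b$, whence $\dotprod{\boldx}{\boldtau}<b\le\dotprod{\boldu}{\boldtau}$ for every $\boldu\in D$, ruling out $\boldx\in D$.

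The only potential obstacle is the degenerate case $\mathrm{int}(C)=\emptyset$, where $K$ is empty and the Hahn--Banach step does not directly apply. However the hypothesis is then vacuously satisfied and the conclusion is witnessed by the trivial pair $(\boldtau,b)=(\boldzero,0)$, so no additional argument is required for the equivalence to hold. The only real content is the application of Hahn--Banach to the open convex set $\mathrm{int}(C)-D$, together with the closure argument used to upgrade the inequality from $\mathrm{int}(C)$ to $C$.
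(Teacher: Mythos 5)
The paper offers no proof of Theorem~\ref{thm:separating} to compare against: it is invoked as a classical fact of convex analysis. Your argument for the substantive direction is the standard textbook derivation and is essentially correct: $K=\mathrm{int}(C)-D$ is open, convex, and omits $\boldzero$ precisely when $\mathrm{int}(C)\cap D=\emptyset$; geometric Hahn--Banach then yields a nonzero $\boldtau$ with $\Angle{\boldx,\boldtau}\leq\Angle{\boldu,\boldtau}$ for all $\boldx\in\mathrm{int}(C)$, $\boldu\in D$; and taking $b$ to be the supremum over $\mathrm{int}(C)$, together with the fact that $C=\overline{\mathrm{int}(C)}$ for a closed convex set with nonempty interior, gives both inequalities. (Pedantically, the Hahn--Banach step also needs $K\neq\emptyset$, i.e.\ $D\neq\emptyset$; that corner is as trivial as the one you do treat.)

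The genuine wrinkle is the role of $\boldtau=\boldzero$, where your write-up is internally inconsistent in a way that mirrors an imprecision of the statement itself. As literally stated, the theorem puts no restriction on $\boldtau$, so the pair $(\boldzero,0)$ always witnesses the right-hand side and the ``if'' direction is false outright (take $C=D$ a closed ball); accordingly you prove ($\Leftarrow$) only ``with $\boldtau\neq\boldzero$'', which silently changes the statement. If instead one repairs the statement by requiring $\boldtau\neq\boldzero$ --- which is also what the paper's decision procedure needs, since $\boldtau=\boldzero$ trivially satisfies (\ref{eq:separate}) --- then your degenerate case $\mathrm{int}(C)=\emptyset$ can no longer be discharged by the witness $(\boldzero,0)$, and in fact the forward implication genuinely fails there: for $C=\set{\boldzero}$ and $D$ the closed unit ball we have $\mathrm{int}(C)\cap D=\emptyset$, yet no nonzero $\boldtau$ and $b$ satisfy both inequalities. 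The coherent reading is to demand $\boldtau\neq\boldzero$ and $\mathrm{int}(C)\neq\emptyset$, which is exactly the situation in which the paper applies the theorem ($C=A^\infty(U)$ is full dimensional after Proposition~\ref{prop:WLOG}); under that reading your Hahn--Banach argument plus your ($\Leftarrow$) argument form a complete proof. As written, however, your proposal proves neither the literal statement (whose converse is false) nor the $\boldtau\neq\boldzero$ version (whose forward direction your degenerate case does not, and cannot, cover), so you should state explicitly which version you are proving and drop or re-justify the degenerate case accordingly.
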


It follows from Proposition~\ref{prop:closure} and
Theorem~\ref{thm:separating} that $A^*(U)$ and $Q$ are disjoint if and
only if there exists $\boldtau\in\R^d$ and
$b\in\R$ such that
$\Angle{\boldx,\boldtau}\leq b$ for all
$\boldx \in A^\infty(U)$ and
$\Angle{\boldx,\boldtau}\geq b$ for all
$\boldx\in Q$.  This motivates us to define the \emph{cone of
  separators} of $A^*(U)$ and $Q$ to be
\[ \CONE(A^\infty(U),Q) := \left\{ \boldtau \in \R^d : \forall
    \boldu \in A^\infty(U) \, \forall \boldv \in Q , \Angle{\boldu,\boldtau} \leq
    \Angle{\boldv,\boldtau} \right\} \, . \] It is straightforward to verify
that $\CONE(A^\infty(U),Q)$ is a topologically closed cone in
$\R^d$.  Moreover by the assumption that $A^\infty(U)$ is full
dimensional we have that $\CONE(A^\infty(U),Q)$ is a \emph{pointed
  cone}, that is, for all $\boldtau\in\R^d$ if both
$\boldtau,-\boldtau \in \CONE(A^\infty(U),Q)$
then $\boldtau=\boldzero$. See Figure~\ref{fig:separating_hyperplane} for
graphical representation of the cone.

The main goal of Propositions~\ref{prop:rational-supporting-hyperplane}--\ref{prop:one-dim}
is to show the following lemma. Intuitively, if the boundary of the reachable set
is a face, then there is only one possible direction for a separator (the normal to the
face) and it is algebraic because the face has an algebraic description (Proposition~\ref{prop:rational-supporting-hyperplane}).
If it is a lower dimensional face, then there is a cone of possible directions
and we need to show that it contains an algebraic one. The idea is to choose
a vector satisfying particular conditions in the hope that those conditions
allow us to recover unicity of the direction (and algebraicity by writing
equations with algebraic entries). One such condition is to consider
an extremal vector $\tau$ of the cone. Intuitively, a vector is extremal because
of two possible reasons: either because the supporting hyperplane is ``blocked'' by $Q$
(see Figure~\ref{fig:separating_hyperplane}), then we get (algebraic) equations from $Q$.
Or because changing $\tau$ would change the face that the hyperplane is supporting
(see Figure~\ref{fig:dominating_set}), then we get that $\tau$ is ``dominating''
in some sense, and this gives us further equations. The crux of of the proof
is Proposition~\ref{prop:one-dim} showing that when all factors are considered, there
is essentially a unique separating direction and therefore it must be algebraic.

\begin{lemma}\label{lem:algebraic_separator}
    If there is a separator of $A^*(U)$ and $Q$ then there is an algebraic separator.
\end{lemma}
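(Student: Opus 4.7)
The plan is to consider the cone of separators $\CONE(A^\infty(U), Q)$ and to exhibit a single extremal ray of this cone spanned by an algebraic vector. Since the cone is topologically closed and pointed in $\R^d$, every element is a nonnegative combination of its extremal rays, so producing one algebraic extremal direction suffices. To that end I would fix an arbitrary extremal $\boldtau \in \CONE(A^\infty(U), Q)$, set $b := \sup_{\boldx \in A^\infty(U)} \dotprod{\boldx}{\boldtau}$, and analyse the two active faces $F := A^\infty(U) \cap \{\boldx : \dotprod{\boldx}{\boldtau} = b\}$ and $G := Q \cap \{\boldx : \dotprod{\boldx}{\boldtau} = b\}$ where the separating hyperplane meets $A^\infty(U)$ and $Q$ respectively.

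Next I would split into two cases according to the dimension of $F$. If $F$ is a facet of $A^\infty(U)$, then $\boldtau$ is its outward normal up to positive scalar; since $A$ and $U$ are rational, the self-affine identity $A^\infty(U) = A \cdot A^\infty(U) + U$ combined with Proposition~\ref{prop:rational-supporting-hyperplane} forces this normal to be rational, which settles this case. The delicate situation is when $F$ has codimension strictly greater than one (as in Figure~\ref{fig:convex-lower-dim-face}), because then a whole cone of directions supports $A^\infty(U)$ at $F$ and one must show that extremality of $\boldtau$ inside the separator cone nevertheless pins it down to an algebraic direction.

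In the delicate case I would collect every linear constraint that extremality imposes on $\boldtau$. The first source of constraints is $G$: any two vertices $\boldv, \boldv'$ of $Q$ lying in $G$ satisfy $\dotprod{\boldv - \boldv'}{\boldtau} = 0$, an equation with rational coefficients. The second, and more subtle, source is the self-similar structure of $A^\infty(U)$ near $F$: unfolding the fixed-point equation recursively, the condition that $F$ is contained in $\{\boldx : \dotprod{\boldx}{\boldtau} = b\}$ translates into orthogonality of $\boldtau$ to a whole family of iterates $A^i\boldz$ with $\boldz$ rational. Under the hypothesis that some power $A^M$ has exclusively positive real spectrum, these ``dominating'' orthogonality conditions can be recast as conditions involving the real eigenvectors of $A^\top$, which are themselves algebraic. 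Assembling both families of equations yields a linear system with algebraic coefficients that $\boldtau$ must satisfy.

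The crux, which is what Proposition~\ref{prop:one-dim} is meant to achieve, is then to show that this system cuts $\R^d$ down to a one-dimensional subspace defined over the algebraic numbers; any nonzero vector in that line serves as the required algebraic separator. I expect the main difficulty to lie exactly here: showing one-dimensionality of the constraint locus, rather than merely algebraicity. Without the real-spectrum assumption on some $A^M$, the dominating constraints could involve genuinely complex eigendirections and fail to cut out a real line in $\R^d$, so the spectral hypothesis of simple LTI systems is essential at precisely this step.
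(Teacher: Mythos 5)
Your proposal follows essentially the same route as the paper: pick an extremal vector $\boldtau$ of the closed pointed cone $\CONE(A^\infty(U),Q)$, observe that the constraints coming from $\MIN_{\boldtau}(Q)$, from $\Aff_0(\MAX_{\boldtau}(A^\infty(U)))$ (Proposition~\ref{prop:rational-supporting-hyperplane}), and from the domination/spectral conditions encoded in $\DOM_{\mathcal{L}}(\boldtau)$ form a linear system with algebraic coefficients, and invoke Proposition~\ref{prop:one-dim} to conclude that this system cuts out exactly $\Span(\boldtau)$, hence $\boldtau$ may be taken algebraic. The facet versus lower-dimensional-face case split you add is harmless but unnecessary, since the paper's (and your) main argument handles both cases uniformly.
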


Given $\boldtau\in\R^d$ and a closed set $C\subseteq\R^d$, define
\begin{eqnarray*}
\MAX_{\boldtau}(C)&:=& \{ \boldv\in C:\forall \boldw\in C, \Angle{\boldv,\boldtau}\geq\Angle{\boldw,\boldtau}\}\\
\MIN_{\boldtau}(C)&:=& \{ \boldv\in C:\forall \boldw\in C, \Angle{\boldv,\boldtau}\leq\Angle{\boldw,\boldtau}\} \, .
\end{eqnarray*}

For fixed $\boldtau\in\R^d$,
note that $\left\langle \sum_{i=0}^\infty A^i\boldu_i ,\boldtau\right\rangle$
is maximised for $\boldu_0,\boldu_1,\ldots \in U$ if and only if each individual
inner product $\left\langle A^i\boldu_i ,\boldtau\right\rangle$ is maximised.
In other words,
\begin{gather} \MAX_{\boldtau}(A^\infty(U)) = \MAX_{\boldtau}(A^0(U))+
\MAX_{\boldtau}(A^1(U))+
\MAX_{\boldtau}(A^2(U))+  \cdots 
\label{eq:local-max}
\end{gather}
Given $S\subseteq\R^d$, define
\[ \Aff_0(S) = \left\{ \sum_{i=1}^m \lambda_i \boldu_i : \sum_{i=1}^m
    \lambda_i=0 \mbox{ and } \boldu_1,\ldots,\boldu_m \in S, m\in\N
  \right\} \, . \] Then $\Aff_0(S)$ is a vector subspace of
$\R^d$---indeed $\Aff_0(S)$ is the unique translation of the
affine hull of $S$ that contains the origin.

\begin{proposition}\label{prop:rational-supporting-hyperplane}
  The vector space $\Aff_0(\MAX_{\boldtau}(A^\infty(U)))$ has a basis of
  rational vectors and
  $\Aff_0(\MAX_{\boldtau}(A^i(U))) \subseteq \Aff_0(\MAX_{\boldtau}(A^\infty(U)))$
  for any $i\in\N$.
\end{proposition}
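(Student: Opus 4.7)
The plan is to exploit the fact that $\MAX_{\boldtau}$ picks out faces of rational polytopes, so the affine hulls are automatically rational, and then to lift this to $A^\infty(U)$ via identity~(\ref{eq:local-max}), noting that a rising chain of subspaces in $\R^d$ must stabilise.

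First I would analyse each layer separately. For any vector $\boldsigma\in\R^d$, $\MAX_{\boldsigma}(U)$ is the face of the rational polytope $U$ on which the linear functional $\Angle{\cdot,\boldsigma}$ is maximised, so $\MAX_{\boldsigma}(U)$ is itself a polytope with rational vertices. Using the identity $\Angle{A^i\boldu,\boldtau}=\Angle{\boldu,(A^\top)^i\boldtau}$, I obtain $\MAX_{\boldtau}(A^i(U))=A^i\bigl(\MAX_{(A^\top)^i\boldtau}(U)\bigr)$, which is the image under the rational linear map $A^i$ of a face of $U$. Its vertices are therefore rational, so the subspace $\Aff_0(\MAX_{\boldtau}(A^i(U)))$, being spanned by differences of these vertices, admits a rational basis.

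Next I would combine the layers. By~(\ref{eq:local-max}) every element of $\MAX_{\boldtau}(A^\infty(U))$ has the form $\sum_{i=0}^\infty \boldv_i$ with $\boldv_i\in\MAX_{\boldtau}(A^i(U))$ (the series converging because $\rho(A)<1$). A short direct computation shows
\[
\Aff_0(\MAX_{\boldtau}(A^\infty(U)))=\sum_{i=0}^{\infty}\Aff_0(\MAX_{\boldtau}(A^i(U))).
\]
For the inclusion $\supseteq$, fix $i_0$ and a reference point $\boldv_i^\ast\in\MAX_{\boldtau}(A^i(U))$ for each $i\neq i_0$; then any two elements of $\MAX_{\boldtau}(A^{i_0}(U))$ can be embedded into $\MAX_{\boldtau}(A^\infty(U))$ by attaching the $\boldv_i^\ast$'s, and their difference lands in $\Aff_0(\MAX_{\boldtau}(A^\infty(U)))$. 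For the reverse inclusion, a finite affine-null combination $\sum_j\lambda_jx_j$ of elements of $\MAX_{\boldtau}(A^\infty(U))$ can be rearranged coordinatewise: writing $x_j=\sum_i\boldu_{j,i}$, one has $\sum_j\lambda_j x_j=\sum_i\sum_j\lambda_j\boldu_{j,i}$, and since $\sum_j\lambda_j=0$ each inner sum lies in $\Aff_0(\MAX_{\boldtau}(A^i(U)))$.

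Finally, the partial sums $W_N:=\sum_{i=0}^N\Aff_0(\MAX_{\boldtau}(A^i(U)))$ form an increasing chain of subspaces of the finite-dimensional space $\R^d$, hence stabilise at some $N_0\leq d$. Therefore $\Aff_0(\MAX_{\boldtau}(A^\infty(U)))=W_{N_0}$ is a finite sum of subspaces each with a rational basis, and so it too has a rational basis. The second assertion of the proposition, that $\Aff_0(\MAX_{\boldtau}(A^i(U)))\subseteq\Aff_0(\MAX_{\boldtau}(A^\infty(U)))$ for every $i\in\N$, is then just one summand of this decomposition. The only delicate point is justifying the infinite-sum identity for $\Aff_0$, but the stabilisation argument above reduces it to a finite algebraic manipulation, so I do not expect a serious obstacle.
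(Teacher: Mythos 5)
Your proof is correct and takes essentially the same route as the paper: use the decomposition~(\ref{eq:local-max}), embed each layer $\MAX_{\boldtau}(A^i(U))$ into $\MAX_{\boldtau}(A^\infty(U))$ via fixed reference maximizers in the other layers, and reduce the infinite sum of subspaces to a finite one by stabilisation of the increasing chain, so that rationality follows from the rational vertices of each $\MAX_{\boldtau}(A^i(U))$. One minor quibble: the chain $W_N$ stabilises at some finite index but not necessarily by index $d$ (the maximizing face of $U$ in direction $(A^\top)^i\boldtau$ varies with $i$, so increments need not occur among the first $d$ steps); only finiteness of the stabilisation index is needed, so this does not affect the argument.
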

\begin{proof}
Let $n\in \N$ be such that 
\begin{gather} 
\Aff_0(\MAX_{\boldtau}(A^i(U))) \subseteq 
\Aff_0(\MAX_{\boldtau}(A^0(U)))+
%  \Aff_0(\MAX_{\boldtau}(A^1(U)))+ 
\cdots + \Aff_0(\MAX_{\boldtau}(A^n(U))) 
\label{eq:aff-short}
\end{gather}
for all $i\in\N$.  Such an $n$ exists since the right-hand
side of (\ref{eq:aff-short}) forms an increasing family of subspaces
of $\R^d$ as $n\rightarrow\infty$ and such a sequence must
eventually stabilize.  We claim that
\begin{gather}
 \Aff_0(\MAX_{\boldtau}(A^\infty(U))) = \Aff_0(\MAX_{\boldtau}(A^0(U)))
% +  \Aff_0(\MAX_{\boldtau}(A^1(U)))
+ \cdots + \Aff_0(\MAX_{\boldtau}(A^n(U))) \, .
\label{eq:aff}
\end{gather}
From the claim it follows that $\Aff_0(\MAX_{\boldtau}(A^\infty(U)))$ has a
basis of rational vectors since $\MAX_{\boldtau}(A^i(U))$ is bounded polytope
with rational vertices for $i=0,\ldots,n$.

It remains to prove the claimed equality (\ref{eq:aff}).  The
left-to-right inclusion follows directly from
Equations (\ref{eq:local-max}) and (\ref{eq:aff-short}).  For the
right-to-left inclusion, it suffices to note that for all $i\in\N$ we have that
$\Aff_0(\MAX_{\boldtau}(A^i(U))) \subseteq \Aff_0(\MAX_{\boldtau}(A^\infty(U)))$.
Indeed, suppose that
$A^i\boldu,A^i\boldv \in \MAX_{\boldtau}(A^i(U))$ for some $\boldu,\boldv \in U$ and
$i\in\N$.  Define $\boldu_j,\boldv_j\in U$ for $j\in\N$ by
$\boldu_i=\boldu$, $\boldv_i=\boldv$, and $A^j\boldu_j=A^j\boldv_j\in\MAX_{\boldtau}(A^j(U))$ for
$j\neq i$.  Then
\[ A^i\boldu-A^i\boldv = \sum_{j=0}^\infty A^j\boldu_j - \sum_{j=0}^\infty A^j\boldv_j \in
  \Aff_0(\MAX_{\boldtau}(A^\infty(U))) \, . \] 

\end{proof}

\begin{proposition}\label{prop:bilinear}
  Let the eigenvalues of matrix $A \in \Q^{d\times d}$ be
  $0<\lambda_1< \ldots < \lambda_k$.  Then there is a 
collection of bilinear forms
$L_{ij}:\R^d\times \R^d \rightarrow \R$ with
algebraic coefficients such that
for all
  $\boldu,\boldtau \in \R^d$ we have
\begin{gather} 
 \Angle{A^n\boldu,\boldtau} = \sum_{i=1}^k \sum_{j=0}^{d-1} \binom{n}{j}\lambda_i^n
    L_{ij}(\boldu,\boldtau) \, .
\label{eq:bilinear}
\end{gather}
\end{proposition}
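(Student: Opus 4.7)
The plan is to obtain the formula directly from the Jordan decomposition of $A$, exploiting the fact that $A$ has rational entries (so its spectrum lies in the algebraic numbers) and that, after the reduction given by Proposition~\ref{prop:WLOG}, all eigenvalues are real, positive, and nonzero.

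First I would write $A = P J P^{-1}$, where $J$ is the real Jordan normal form of $A$. Since the $\lambda_i$ are real algebraic numbers and the generalised eigenspaces are cut out by linear equations with algebraic coefficients, one can choose a Jordan basis for which both $P$ and $P^{-1}$ have entries in $\overline{\Q}$. The matrix $J$ is block diagonal with Jordan blocks of the form $J_{i,\ell} = \lambda_i I_{m_{i,\ell}} + N_{m_{i,\ell}}$, where $N_m$ is the nilpotent shift matrix. The basic computation $(\lambda I + N)^n = \sum_{j=0}^{m-1}\binom{n}{j}\lambda^{n-j}N^j$ (valid because $\lambda I$ and $N$ commute and $N^m=0$) then yields, after collecting the contributions of all Jordan blocks attached to each $\lambda_i$, an expression
\[
A^n \;=\; \sum_{i=1}^{k}\sum_{j=0}^{d-1}\binom{n}{j}\,\lambda_i^{\,n-j}\,M_{ij},
\]
where each $M_{ij}\in\overline{\Q}^{d\times d}$ is a fixed matrix (obtained by conjugating the nilpotent powers by $P$, suitably embedded). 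The bound $j\leq d-1$ is valid since no Jordan block has size exceeding $d$; entries above this index are simply zero.

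Next I would take the inner product with $\boldtau$ and factor out $\lambda_i^n$. Because all eigenvalues are strictly positive (and in particular nonzero), each $\lambda_i^{-j}$ is a well-defined algebraic number. Setting
\[
L_{ij}(\boldu,\boldtau) \;:=\; \lambda_i^{-j}\,\Angle{M_{ij}\boldu,\boldtau}
\]
produces a bilinear form on $\R^d\times\R^d$ with coefficients in $\overline{\Q}$, and rearranging gives exactly
\[
\Angle{A^n\boldu,\boldtau} \;=\; \sum_{i=1}^{k}\sum_{j=0}^{d-1}\binom{n}{j}\,\lambda_i^{\,n}\,L_{ij}(\boldu,\boldtau),
\]
as required.

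There is no real obstacle here; the argument is essentially linear algebra plus the observation that the Jordan data of a rational matrix with positive real spectrum is defined over $\overline{\Q}$. The only points that warrant care are (i) verifying that the change-of-basis matrix $P$ can genuinely be chosen with algebraic entries (which follows because the generalised eigenspaces $\ker(A-\lambda_i I)^{d}$ are defined over $\Q(\lambda_i)\subseteq\overline{\Q}$ and standard algorithms for producing a Jordan basis within each generalised eigenspace remain in $\overline{\Q}$), and (ii) justifying the absorption of $\lambda_i^{-j}$ into the bilinear form, which uses invertibility of $A$ from Proposition~\ref{prop:WLOG}.
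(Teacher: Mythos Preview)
Your proposal is correct and follows essentially the same route as the paper. The only cosmetic difference is that the paper phrases the decomposition as the Jordan--Chevalley decomposition $A = P^{-1}DP + N$ (with $P^{-1}DP$ diagonalisable, $N$ nilpotent, and the two commuting) rather than the Jordan normal form $A = PJP^{-1}$; the subsequent binomial expansion, the collection of terms by eigenvalue via idempotent projectors, and the absorption of $\lambda_i^{-j}$ into $L_{ij}$ are identical in both arguments.
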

\begin{proof}
  By the Jordan–Chevalley decomposition, we have $A=P^{-1}DP+N$ where
  $D$ is diagonal, $N$ is nilpotent, $P^{-1}DP$ and $N$ commute, and
  all matrices have algebraic coefficients and can be computed\footnote{The
  eigenvalues are algebraic, being roots of the characteristic polynomial.
  The (generalized) eigenvectors are then algebraic, being solutions to linear
  equations with algebraic coefficients. The matrices $D$ and $P$ can then
  be defined in terms of eigenvalues and (generalized) eigenvectors. See \cite{Cai00} for
  more details.}. Moreover we can write
  $D=\lambda_1D_1 + \cdots + \lambda_kD_k$ for appropriate idempotent
  diagonal matrices $D_1,\ldots,D_k$.  Then for all $n\in \mathbb{N}$
  we have
\begin{eqnarray}
  \Angle{A^n\boldu,\boldtau}
  &=& \boldtau^\top (P^{-1}DP+N)^n\boldu\notag \\
  &=&\boldtau^\top \sum_{j=0}^n \binom{n}{j}P^{-1}D^{n-j}P N^j\boldu \notag\\
  &=& \boldtau^\top \sum_{j=0}^n \binom{n}{j}P^{-1}(\lambda_1^{n-j}D_1+
    \cdots + \lambda_k^{n-j}D_k)PN^j\boldu \notag \\
  &=& \sum_{i=1}^k \lambda_i^n 
   \sum_{j=0}^{d-1} \binom{n}{j}
\underbrace{\lambda_i^{-j}\boldtau^\top P^{-1}D_iPN^j \boldu}_{=:L_{ij}(\boldu,\boldtau)} 
\label{eq:collect} \\
  &=&  \sum_{i=1}^k \sum_{j=0}^{d-1}
       \binom{n}{j} \lambda_i^n L_{ij}(\boldu,\boldtau)
\, , \notag
\end{eqnarray}
where $L_{ij}(\boldu,\boldtau)$  is defined
in (\ref{eq:collect}).  This concludes the proof
because each $L_{i,j}$ is clearly bilinear with algebraic
coefficients.
\end{proof}

Given $\boldtau\in\R^d$, define
$\DOM_{\mathcal{L}}(\boldtau)$ to be the
linear subspace of $\R^d$
comprising all $\boldtau'$
such that for all $\boldu,\boldu'\in\Ext(U)$ and every
bilinear form $L_{ij}$ as in (\ref{eq:bilinear}) if
$L_{ij}(\boldu-\boldu',\boldtau)=0$ then
$L_{ij}(\boldu-\boldu',\boldtau')=0$.
It is clear that $\DOM_{\mathcal{L}}(\boldtau)$
that has a basis of vectors all of whose entries are
algebraic numbers. Indeed, $\boldtau'\in\DOM_{\mathcal{L}}(\boldtau)$
if it satisfies some equations of the form $L_{ij}(\boldu-\boldu',\boldtau')=0$
but $L_{ij}$ is bilinear, has algebraic coefficients and
$\boldu,\boldu'$ have algebraic coefficients.
See Figure~\ref{fig:dominating_set} for a geometrical intuition
of this notion.

\begin{figure*}
    \begin{center}
    \begin{tikzpicture}[
        fillcolor/.style={
            draw=none, fill=darkgreen, fill opacity=0.1
        },
        legend style/.style={
            draw=none, fill=white, inner sep=1.5pt
        },
        ptcolor/.style={draw=blue,circle,inner sep=\fracszpt,fill=blue},
        edgecolor/.style={blue,line width=\fracszedge}, normcolor/.style=red,
        qpoly/.style={draw=red,line width=\fracszedge,pattern color=red, pattern=crosshatch dots},
        qsep/.style={line width=0.5pt,gray},
        qtau/.style={line width=1pt,gray,->},
        conedge/.style={black,dashed}, coneinfedge/.style={black,dashed},
        conefill/.style={fill=gray, fill opacity=0.15},
        scale=1.5]
        \newcommand{\fracszpt}{0.7pt}
        \newcommand{\fracszedge}{0.7pt}
        \coordinate (zero) at (0,0) {};
        \coordinate (vc0) at (0, 1) {};
        \coordinate (vc1) at (2, 0) {};
        \coordinate (vc2) at (1.5, -1) {};
        \coordinate (vc3) at (-1, -1) {};
        \coordinate (vc4) at (-1.3, -9/10) {};
        \coordinate (vc5) at (-1.7, -2/3) {};
        \coordinate (vc6) at (-1.9, -1/3) {};
        \coordinate (vcleft) at (-2, 0) {};
        \coordinate (vcm1) at (-1, 1) {};
        \coordinate (vcm2) at (-1.3, 9/10) {};
        \coordinate (vcm3) at (-1.7, 2/3) {};
        \coordinate (vcm4) at (-1.9, 1/3) {};
        \draw[edgecolor,dotted,thick] (vcleft) -- (vcm4);
        \draw[edgecolor] (vcm4) -- (vcm3);
        \draw[edgecolor,dotted,thick] (vcm3) -- (vcm2);
        \draw[edgecolor] (vcm2) -- (vcm1) -- (vc0) -- (vc1) -- (vc2) -- (vc3) -- (vc4);
        \draw[edgecolor,dotted,thick] (vc4) -- (vc5);
        \draw[edgecolor] (vc5) -- (vc6);
        \draw[edgecolor,dotted,thick] (vc6) -- (vcleft);
        \node[ptcolor] at (vcm4) {};
        \node[ptcolor] at (vcm3) {};
        \node[ptcolor] at (vcm2) {};
        \node[ptcolor] at (vcm1) {};
        \node[ptcolor] at (vc0) {};
        \node[ptcolor] at (vc1) {};
        \node[ptcolor] at (vc2) {};
        \node[ptcolor] at (vc3) {};
        \node[ptcolor] at (vc4) {};
        \node[ptcolor] at (vc5) {};
        \node[ptcolor] at (vc6) {};
        \node[ptcolor] at (vcleft) {};
        \fill[fillcolor]  (vcm4) -- (vcm3) -- (vcm2) -- (vcm1) -- (vc0) -- (vc1) -- (vc2) -- (vc3) -- (vc4) -- (vc5) -- (vc6) -- (vcleft) --cycle;
        \draw (0,0) node[legend style] {$A^*(U)$};

        \newcommand{\tausz}{0.5cm}
        \coordinate (tau) at (1,0.2);
        \draw[qtau] (vc1) -- ++($(zero)!\tausz!(tau)$) node[right] {$\boldtau$};
        \draw[qsep] ($(vc1)!1!90:($(vc1)+(tau)$)$) -- ($(vc1)!1!-90:($(vc1)+(tau)$)$);
        \newcommand{\taueps}{0.5}
        \coordinate (taup) at (0.2,-1);
        \coordinate (tausum) at ($(tau)+\taueps*(taup)$);
        \draw[qtau] (vc1) -- ++($(zero)!\tausz!(tausum)$) node[right] {$\boldtau+\epsilon\boldtau'$};
        \draw[qsep] ($(vc1)!1!90:($(vc1)+(tausum)$)$) -- ($(vc1)!1!-90:($(vc1)+(tausum)$)$);
        \renewcommand{\taueps}{0.5}
        \coordinate (taup) at (-0.9,1.2);
        \coordinate (tausum) at ($(tau)+\taueps*(taup)$);
        \draw[qtau] (vc1) -- ++($(zero)!\tausz!(tausum)$) node[right] {$\boldtau+\epsilon\boldtau'$};
        \draw[qsep] ($(vc1)!1!90:($(vc1)+(tausum)$)$) -- ($(vc1)!1!-90:($(vc1)+(tausum)$)$);

        \draw (vc1) node[draw,circle,very thick,red,inner sep=3pt] (vc1circ) {};
        \draw (3,-1) node[red,anchor=west] (max1text) {$\begin{array}{l}
            \MAX_{\boldtau+\epsilon\boldtau'}(A^\infty(U))\\
            =\MAX_{\boldtau}(A^\infty(U))\end{array}$};
        \draw (max1text.west) edge[red,->,thick,bend left=10,shorten >=1pt] (vc1circ);

        \coordinate (tau) at (-1,0);
        \draw[qtau] (vcleft) -- ++($(zero)!\tausz!(tau)$) node[left] {$\boldtau$};
        \draw[qsep] ($(vcleft)!1!90:($(vcleft)+(tau)$)$) -- ($(vcleft)!1!-90:($(vcleft)+(tau)$)$);
        \coordinate (tau) at ($(vcm4)!(zero)!(vcm3)$);
        \draw[qtau] ($(vcm4)!0.5!(vcm3)$) -- ++($(zero)!\tausz!(tau)$) node[left] {$\boldtau+\epsilon\boldtau'$};
        \draw[qsep] ($(vcm4)!1!90:($(vcm4)+(tau)$)$) -- ($(vcm4)!1!-90:($(vcm4)+(tau)$)$);
        \draw (vcleft) node[draw,circle,very thick,red,inner sep=3pt] (vcleftcirc) {};
        %\draw node[fit=(vcm3) (vcm4),draw,rectangle,red,thick] (vcm34circ) {};
        % trick: we want to rotate the rectangle in such a way it aligned on the edge, the only
        % easy way I know to do this is to redraw the path with no action and use sloped, also we
        % we need to use midway or pos otherwise the tangent to the curve is not right
        % also we need transform shape because we can't set minimum width in independent unit, we
        % need to untransform it so that it cancels with scaling
        \draw let \p1=($(vcm4)-(vcm3)$) in (vcm3) -- (vcm4) node[midway,sloped,rounded corners=\fracszpt*4,
            rectangle,draw,red,very thick,inner sep=\fracszpt*3,minimum width={\fracszpt*6+veclen(\x1,\y1)},transform shape]
            (vcm34circ) {};
        \draw (-2.6,-1.2) node[red,anchor=west,inner sep=0pt] (max2text)
            {$\MAX_{\boldtau}(A^\infty(U))$};
        \draw ($(max2text.north west)!0.2!(max2text.north east)$) edge[red,->,thick,bend left=10,shorten >=1pt] (vcleftcirc);
        \draw (-1.5,1.5) node[red,anchor=east,inner sep=0pt] (max3text)
            {$\MAX_{\boldtau+\epsilon\boldtau'}(A^\infty(U))$};
        \draw ($(max3text.south west)!0.8!(max3text.south east)$) edge[red,->,thick,bend left=10,shorten >=1pt] (vcm34circ);
    \end{tikzpicture}
    \end{center}
    \caption{The set $\DOM_{\mathcal{L}}(\boldtau)$ contains
    directions $\tau'$ such that the sequence $\Angle{A^n(u-u'),\tau'}$ does not asymptotically
    dominate $\Angle{A^n(u-u'),\tau}$ in absolute value (for all $u,u'\in U$).  Intuitively,
    eventual $\tau$ maximizers in $U$ are also eventual $\tau+\varepsilon\tau'$ maximizers
for $\varepsilon$ small enough.  Thus the face of
    $A^\infty(U)$ in direction $\tau+\epsilon\tau'$ is the same as the one in direction $\tau$
    for small $\epsilon$ (see Proposition~\ref{prop:perturb}), as illustrated on the right.
    When $\tau'$ asymptotically dominates $\tau$, any perturbation of $\tau$ will result in a
    different face, as illustrated on the left. In particular this can happen when there is an
    infinite number of faces converging to a single point (as in Figure~\ref{fig:convex-lower-dim-face}).
    \label{fig:dominating_set}}
\end{figure*}
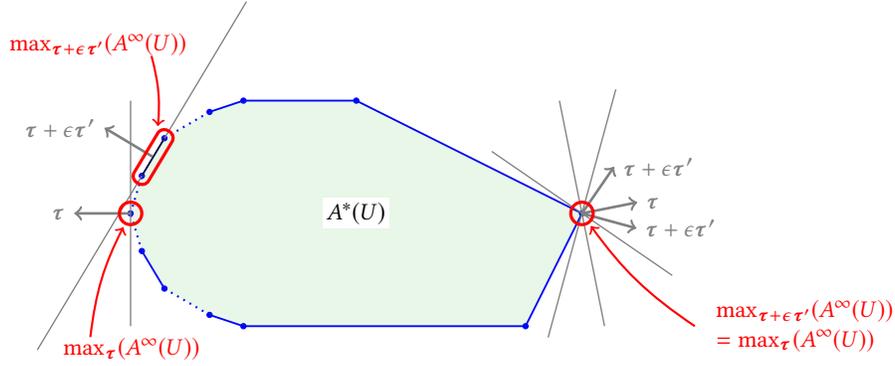

We say that a sequence of real numbers
$\langle x_n : n\in \N \rangle$ is \emph{positive} if $x_n>0$
for all $n\in\N$.  We moreover say that 
$\langle x_n : n\in \N \rangle$ is \emph{ultimately positive} if
there exists $N\in\N$ such that $x_n>0$ for all $n\geq N$.

\begin{proposition}\label{prop:ultpos}
  Suppose that $\boldtau,\boldtau'\in\R^d$
  are such that
  $\boldtau'\in\DOM_{\mathcal{L}}(\boldtau)$.
  Then for all $\boldu,\boldu'\in\Ext(U)$, if the
  sequence
  $\Angle{A^n(\boldu-\boldu'),\boldtau}$ is
  positive (resp. ultimately positive) then there exists $\varepsilon>0$ such that the sequence
  $\Angle{A^n(\boldu-\boldu'),\boldtau+\varepsilon\boldtau'}$
  is also positive (resp. ultimately positive).
\end{proposition}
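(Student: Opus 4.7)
The plan is to exploit the expansion of $\Angle{A^n\boldw,\boldtau}$ provided by Proposition~\ref{prop:bilinear} and to identify the asymptotically dominant term of the perturbed sequence.

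Fix $\boldu,\boldu'\in\Ext(U)$ and let $\boldw:=\boldu-\boldu'$. Writing $a_{ij}:=L_{ij}(\boldw,\boldtau)$ and $b_{ij}:=L_{ij}(\boldw,\boldtau')$, Proposition~\ref{prop:bilinear} gives
\[
s_n := \Angle{A^n\boldw,\boldtau} = \sum_{i,j}\binom{n}{j}\lambda_i^n a_{ij},
\qquad
s_n' := \Angle{A^n\boldw,\boldtau'} = \sum_{i,j}\binom{n}{j}\lambda_i^n b_{ij}.
\]
The hypothesis $\boldtau'\in\DOM_{\mathcal{L}}(\boldtau)$ is exactly that $a_{ij}=0\Rightarrow b_{ij}=0$, so the support of $(b_{ij})$ is contained in $S:=\set{(i,j):a_{ij}\neq 0}$. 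I order the indices lexicographically, declaring $(i,j)\prec(i',j')$ iff $\lambda_i<\lambda_{i'}$, or $\lambda_i=\lambda_{i'}$ and $j<j'$. If $S=\emptyset$ then $s_n\equiv 0$, contradicting (ultimate) positivity; otherwise let $(i^*,j^*)$ be the $\prec$-maximum of $S$. Since the $\lambda_i$ are pairwise distinct and $j$ ranges over a finite set, every other term in either sum is strictly dominated by the $(i^*,j^*)$-term, so $s_n/\bigl(\binom{n}{j^*}\lambda_{i^*}^n\bigr) \to a_{i^*j^*}$ while $|s_n'|/\bigl(\binom{n}{j^*}\lambda_{i^*}^n\bigr)$ remains bounded. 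Ultimate positivity of $s_n$ therefore forces $a_{i^*j^*}>0$.

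I then pick constants $c>0$, $C\geq 0$ and $N\in\N$ with $s_n\geq c\binom{n}{j^*}\lambda_{i^*}^n$ and $|s_n'|\leq C\binom{n}{j^*}\lambda_{i^*}^n$ for all $n\geq N$; any $\varepsilon\in(0,\,c/(C+1))$ then yields $s_n+\varepsilon s_n'>0$ for every $n\geq N$, settling the ultimately positive case. For the stronger ``positive'' case we additionally have $s_n>0$ at each of the finitely many $n<N$, and I can shrink $\varepsilon$ further so that $s_n+\varepsilon s_n'>0$ at each such $n$ as well.

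The one delicate point will be the asymptotic step: ruling out cancellation among terms of comparable growth. This is automatic here because the $\prec$-maximum $(i^*,j^*)$ of $S$ is \emph{unique} (distinct eigenvalues together with distinct $j$'s), so a single index dictates the leading asymptotics. The crux of the DOM hypothesis is the inclusion $\mathrm{supp}(b)\subseteq S$: without it, $s_n'$ could grow strictly faster than $s_n$, and no small perturbation would be safe.
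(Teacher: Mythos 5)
Your proposal is correct and follows essentially the same route as the paper's proof: expand both sequences via Proposition~\ref{prop:bilinear}, use the $\DOM_{\mathcal{L}}(\boldtau)$ hypothesis to ensure no term of the perturbing sequence dominates the (unique) leading term of $\Angle{A^n(\boldu-\boldu'),\boldtau}$, bound the ratio for $n\geq N$, and shrink $\varepsilon$ further to handle the finitely many initial indices in the strictly positive case. Your version merely makes explicit two points the paper leaves implicit (uniqueness of the $\prec$-maximal index and positivity of its coefficient $a_{i^*j^*}$), which is a welcome clarification but not a different argument.
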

\begin{proof}
By Proposition~\ref{prop:bilinear} we have that 
\begin{gather}
 \Angle{A^n(\boldu-\boldu'),\boldtau}
 = \sum_{i=1}^k \sum_{j=0}^{d-1} \binom{n}{j} \lambda_i
    L_{ij}(\boldu-\boldu',\boldtau) \, .
\label{eq:ultpos}
\end{gather}
Define a dominance ordering $\prec$ on the terms in (\ref{eq:ultpos}),
where $\binom{n}{j} \lambda_i^n \prec \binom{n}{\ell}\lambda_p^n$ if
$\lambda_i<\lambda_p$ or $\lambda_i=\lambda_j$ and $j<\ell$.  Clearly
$\binom{n}{j}\lambda_i^n \prec \binom{n}{\ell}\lambda_p^n$ implies
that
$\binom{n}{j}\lambda_i^n = o\left(\binom{n}{\ell}\lambda_p^n\right)$
as $n\rightarrow\infty$.

Let $\binom{n}{j_0}\lambda_{i_0}^n$ be the dominant term in the expansion
(\ref{eq:ultpos}) of
$\Angle{A^n(\boldu-\boldu'),\boldtau}$. Since
$\boldtau' \in \DOM_{\mathcal{L}}(\boldtau)$, the
expansion of
$\Angle{A^n(\boldu-\boldu'),\boldtau'}$
does not have any term that strictly dominates $\binom{n}{j_0}\lambda^{i_0}$.
It follows that $\Angle{A^n(\boldu-\boldu'),\boldtau'}
=O(\Angle{A^n(\boldu-\boldu'),\boldtau})$ as $n\rightarrow\infty$.
In particular, there exists absolute constants $N\in\N$ and $K>0$ such that
\[
|\Angle{A^n(\boldu-\boldu'),\boldtau})|
    \leqslant K\Angle{A^n(\boldu-\boldu'),\boldtau}
\]
for all $n\geqslant N$, since the sequence
$\Angle{A^n(\boldu-\boldu'),\boldtau}$ is positive (resp. ultimately
positive, in which case we just have to take $N$ large enough).
Then for any $\epsilon>0$ and $n\geqslant N$,
\begin{align*}
\Angle{A^n(\boldu-\boldu'),\boldtau+\varepsilon\boldtau'}
  &= \Angle{A^n(\boldu-\boldu'),\boldtau}+
  \varepsilon\Angle{A^n(\boldu-\boldu'),\boldtau'}\\
  &\geqslant (1-\epsilon K)\Angle{A^n(\boldu-\boldu'),\boldtau}>0
\end{align*}
if $\epsilon<K^{-1}$. In particular, this proves that
$\Angle{A^n(\boldu-\boldu'),\boldtau+\varepsilon\boldtau'}$
is ultimately positive. On the other hand, for $n<N$, observe that since
\[
  \Angle{A^n(\boldu-\boldu'),\boldtau+\varepsilon\boldtau'}
  = \Angle{A^n(\boldu-\boldu'),\boldtau}+
  \varepsilon
  \Angle{A^n(\boldu-\boldu'),\boldtau'}\, ,\]
by making $\varepsilon$ suitably small we can ensure that
$\Angle{A^n(\boldu-\boldu'),\boldtau+\varepsilon\boldtau'}$
is positive if $\Angle{A^n(\boldu-\boldu'),\boldtau}$
is positive.
\end{proof}

\begin{proposition}\label{prop:perturb}
  Let $\boldtau,\boldtau'\in \R^d$ be such
  that
  $\boldtau'\in\DOM_{\mathcal{L}}(\boldtau)$
  and
  $\boldtau'\in\Aff_0(\MAX_{\boldtau}(A^\infty(U)))^\perp$.
  Then for $\varepsilon$ sufficiently small we have
  $\MAX_{\boldtau}(A^\infty(U)) \subseteq
  \MAX_{\boldtau+\varepsilon\boldtau'}(A^\infty(U))$.
\end{proposition}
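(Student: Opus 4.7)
The plan is to use equation~(\ref{eq:local-max}) to reduce the claim to a pointwise comparison of two exponential polynomials, and then to perform a case analysis on the eventual sign of the first of these. Applying~(\ref{eq:local-max}) to both $\boldtau$ and $\boldtau+\varepsilon\boldtau'$, it suffices to produce a single $\varepsilon>0$ such that $\MAX_{\boldtau}(A^n(U))\subseteq\MAX_{\boldtau+\varepsilon\boldtau'}(A^n(U))$ for every $n\in\N$. Since $A$ is invertible, $A^n(U)$ is a polytope with extreme points $A^n\Ext(U)$, so this inclusion is in turn equivalent to requiring, for all $\boldu,\boldv\in\Ext(U)$ and every $n$ with $A^n\boldu\in\MAX_{\boldtau}(A^n(U))$, that $\Angle{A^n(\boldu-\boldv),\boldtau+\varepsilon\boldtau'}\geqslant 0$. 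Finiteness of $\Ext(U)$ lets me produce $\varepsilon>0$ separately for each pair $(\boldu,\boldv)$ and then take a minimum.

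For a fixed pair, set $f(n):=\Angle{A^n(\boldu-\boldv),\boldtau}$ and $g(n):=\Angle{A^n(\boldu-\boldv),\boldtau'}$. Proposition~\ref{prop:bilinear} expresses each as an exponential polynomial in $n$ whose bases are the positive real eigenvalues of $A$. The hypothesis $\boldtau'\in\DOM_{\mathcal{L}}(\boldtau)$ guarantees that every monomial $\binom{n}{j}\lambda_i^n$ appearing in $g$ also appears in $f$; arguing as in the proof of Proposition~\ref{prop:ultpos} this yields constants $K>0$ and $N_0\in\N$ such that $|g(n)|\leqslant K|f(n)|$ for all $n\geqslant N_0$ with $f(n)\neq 0$. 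Three cases arise: if $f\equiv 0$ then $g\equiv 0$ and nothing more needs to be done; if $f$ is eventually strictly negative then $A^n\boldu$ fails to maximise for all sufficiently large $n$, making the constraint vacuous there; and if $f$ is eventually strictly positive then for $n\geqslant N_0$ and any $\varepsilon<1/K$ one has $f(n)+\varepsilon g(n)\geqslant(1-\varepsilon K)f(n)\geqslant 0$. In each case only a finite initial segment of $n$'s remains to be examined.

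At each of these remaining $n$'s with $A^n\boldu\in\MAX_{\boldtau}(A^n(U))$ we have $f(n)\geqslant 0$. If $f(n)>0$, the requirement $\varepsilon<f(n)/|g(n)|$ (needed only when $g(n)<0$) is a single strict inequality, and finitely many such strict constraints can be imposed simultaneously. If $f(n)=0$, then $A^n\boldv$ also lies in $\MAX_{\boldtau}(A^n(U))$, so $A^n(\boldu-\boldv)\in\Aff_0(\MAX_{\boldtau}(A^n(U)))$; by Proposition~\ref{prop:rational-supporting-hyperplane} this is contained in $\Aff_0(\MAX_{\boldtau}(A^\infty(U)))$, and the hypothesis $\boldtau'\in\Aff_0(\MAX_{\boldtau}(A^\infty(U)))^\perp$ forces $g(n)=0$, whence the desired inequality reduces to $0\geqslant 0$. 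Taking the minimum of the $\varepsilon$'s produced across the finitely many pairs and finitely many exceptional indices yields the uniform $\varepsilon>0$ sought. The main subtlety I anticipate is securing uniformity of $\varepsilon$ in $n$: the DOM hypothesis is exactly what delivers the domination $|g|\leqslant K|f|$ at large $n$, while the orthogonality hypothesis is precisely what is required to handle the boundary cases $f(n)=0$.
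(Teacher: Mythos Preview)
Your proof is correct and follows essentially the same approach as the paper: both reduce via~(\ref{eq:local-max}) to the termwise inclusion $\MAX_{\boldtau}(A^n(U))\subseteq\MAX_{\boldtau+\varepsilon\boldtau'}(A^n(U))$, use the $\DOM$ hypothesis (exactly as in Proposition~\ref{prop:ultpos}) to handle large $n$, and invoke orthogonality together with Proposition~\ref{prop:rational-supporting-hyperplane} for the degenerate cases $f(n)=0$ in a finite initial segment. The paper packages the large-$n$ argument through the auxiliary notion of \emph{eventually $\boldtau$-maximising inputs} $U_{\boldtau}^{\MAX}$ and a global threshold $N$, whereas you carry out the same trichotomy directly pair by pair; the mathematical content is identical.
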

\begin{proof}
Define the set of \emph{eventually $\boldtau$-maximizing inputs}
to be
\[\begin{array}{r@{\,}l}
U_{\boldtau}^{\MAX} = \{ \boldu \in \Ext(U) : \forall \boldv\in \Ext(U):
  &\Angle{A^n(\boldu-\boldv),\boldtau} \text{ is identically}\\
  &\text{0 or ultimately positive}\}\,.
  \end{array}\]
We claim that $U_{\boldtau}^{\MAX}$ is non-empty.  To see this, note that
every sequence of the form $\Angle{A^n(\boldu-\boldv),\boldtau}$ is either
identically zero or ultimately negative or ultimately positive
(depending on the sign of its leading term).  Thus we can define a linear 
preorder on the vertices of $U$ in which $\boldu$ 
greater than $\boldv$ if 
the sequence $\Angle{A^n(\boldu-\boldv),\boldtau}$ is ultimately
non-negative. Since $U$ has a finite number of vertices, there is a
maximal element under this preorder, which is then an element of
$U_{\boldtau}^{\MAX}$.  This establishes the claim.  We
further observe,
that if
$\sum_{i=0}^\infty A^i\boldu_i \in \MAX_{\boldtau}(A^\infty(U))$ then there
exists $N\in\N$ such that for all $i\geq N$,
$\boldu_i\in\mathrm{Conv}(U_{\boldtau}^{\MAX})$, i.e., after some point 
the input $\boldu_i$ must be chosen in the convex hull of the set
of eventually $\boldtau$-maximizing inputs.

We claim that for $\varepsilon>0$ small enough we have $\MAX_{\boldtau}(A^i(U))
\subseteq \MAX_{\boldtau+\epsilon\boldtau'}(A^i(U))$ for $i=0,\ldots,N-1$.
Indeed, $\Aff_0(\MAX_{\boldtau}(A^i(U)))
\subseteq \Aff_0(\MAX_{\boldtau}(A^\infty(U)))$
by Proposition~\ref{prop:rational-supporting-hyperplane}, thus
$\boldtau'\in\Aff_0(\MAX_{\boldtau}(A^i(U)))^\perp$.
Let $A^i\boldu\in\MAX_{\boldtau}(A^i(U))$ and $\boldv\in U$,
then $\Angle{A^i(\boldu-\boldv),\boldtau}\geqslant0$. There are two cases to consider:
if $\Angle{A^i(\boldu-\boldv),\boldtau}=0$ then
$A^i\boldv\in\MAX_{\boldtau}(A^i(U))$ thus
$\Angle{A^i(\boldu-\boldv),\boldtau+\epsilon\boldtau'}=
\epsilon\Angle{A^i(\boldu-\boldv),\boldtau'}=0$.
If $\Angle{A^i(\boldu-\boldv),\boldtau}>0$ then
$\Angle{A^i(\boldu-\boldv),\boldtau+\epsilon\boldtau'}=
\Angle{A^i(\boldu-\boldv),\boldtau+\epsilon\boldtau'}\geqslant 0$
for small enough $\epsilon$.

We claim that
$U_{\boldtau}^{\MAX} \subseteq
  U_{\boldtau+\varepsilon\boldtau'}^{\MAX}$ 
for $\varepsilon$ sufficiently small.  Aside we have seen that for $\varepsilon$ small enough we have
  $\MAX_{\boldtau}(A^i(U)) \subseteq \MAX_{\boldtau+\epsilon\boldtau'}(A^i(U))$ for
  $i=0,\ldots,N-1$.  It follows that for $\varepsilon$ sufficiently
  small we have
  $\MAX_{\boldtau}(A^\infty(U)) \subseteq
  \MAX_{\boldtau+\varepsilon\boldtau'}(A^\infty(U))$, as we wanted to prove.

  It remains to prove the claim.  To this end, consider
  $\boldu \in U_{\boldtau}^{\MAX}$ and
  $\boldv\in \Ext(U)$.  If the sequence
  $\Angle{A^n(\boldu-\boldv),\boldtau}$ is
  identically zero then also
  $\boldv \in U_{\boldtau}^{\MAX}$.  Since
  $\boldtau' \in \Aff_0(\MAX_{\boldtau}(A^i(U)))^\perp$
  for all $i\in\N$ it follows that
  $\Angle{A^n(\boldu-\boldv),\boldtau+\varepsilon\boldtau'}$
  is also identically zero.  On the other hand, if
  $\Angle{A^n(\boldu-\boldv),\boldtau}$ is
  ultimately positive then for $\epsilon$ small enough
  $\Angle{A^n(\boldu-\boldv),\boldtau+\varepsilon\boldtau'}$
  is also ultimately positive by Proposition~\ref{prop:ultpos}.  This
  proves the claim and concludes the proof.
\end{proof}

\begin{proposition}\label{prop:one-dim}
If $\boldtau$ is an extremal vector in 
$\CONE(A^\infty(U),Q)$ then 
\begin{equation} \DOM_{\mathcal{L}}(\boldtau) \cap \Aff_0(\MAX_{\boldtau}(A^\infty(U)) \cup
\MIN_{\boldtau}(Q))^\perp = \Span(\boldtau) \, . 
\label{eq:extremal}
\end{equation}
\end{proposition}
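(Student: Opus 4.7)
The plan is to prove both inclusions. The easy direction $\Span(\boldtau) \subseteq \DOM_{\mathcal{L}}(\boldtau) \cap \Aff_0(\MAX_{\boldtau}(A^\infty(U)) \cup \MIN_{\boldtau}(Q))^\perp$ rests on a preliminary observation: an extremal $\boldtau$ must satisfy $\max_{\boldu\in A^\infty(U)}\Angle{\boldu,\boldtau} = \min_{\boldv\in Q}\Angle{\boldv,\boldtau}$. Indeed, if the inequality were strict there would be a positive margin, so every sufficiently small perturbation $\boldtau \pm \varepsilon\boldw$ would still separate $A^\infty(U)$ from $Q$; writing $\boldtau = \tfrac{1}{2}(\boldtau+\varepsilon\boldw)+\tfrac{1}{2}(\boldtau-\varepsilon\boldw)$ and invoking extremality would then force $\boldw \in \Span(\boldtau)$ for every $\boldw\in\R^d$, a contradiction unless $d=1$. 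With $\max = \min$ established, every difference of elements from $\MAX_\boldtau(A^\infty(U)) \cup \MIN_\boldtau(Q)$ is orthogonal to $\boldtau$, and $\boldtau \in \DOM_{\mathcal{L}}(\boldtau)$ holds trivially from the definition.

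For the main direction, I take $\boldtau'$ in the left-hand intersection and aim to show $\boldtau \pm \varepsilon\boldtau'$ lies in $\CONE(A^\infty(U),Q)$ for all sufficiently small $\varepsilon>0$. Since $\DOM_{\mathcal{L}}(\boldtau)$ is a linear subspace, both $\boldtau'$ and $-\boldtau'$ belong to it, and both are orthogonal to $\Aff_0(\MAX_\boldtau(A^\infty(U)))$; applying Proposition~\ref{prop:perturb} to each sign yields $\MAX_\boldtau(A^\infty(U)) \subseteq \MAX_{\boldtau \pm \varepsilon\boldtau'}(A^\infty(U))$ for small $\varepsilon>0$. Fixing any $\boldu^\star \in \MAX_\boldtau(A^\infty(U))$, this gives
\[
\max_{\boldu\in A^\infty(U)}\Angle{\boldu,\boldtau \pm \varepsilon\boldtau'} \;=\; \Angle{\boldu^\star,\boldtau} \pm \varepsilon\Angle{\boldu^\star,\boldtau'}.
\]
An analogous but considerably easier argument handles $Q$: since $Q$ has finitely many vertices and $\boldtau' \perp \Aff_0(\MIN_\boldtau(Q))$, a vertex $\boldv^\star\in \MIN_\boldtau(Q)$ continues to minimize after perturbation, and
\[
\min_{\boldv\in Q}\Angle{\boldv,\boldtau \pm \varepsilon\boldtau'} \;=\; \Angle{\boldv^\star,\boldtau} \pm \varepsilon\Angle{\boldv^\star,\boldtau'}
\]
for small $\varepsilon>0$.

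The orthogonality of $\boldtau'$ to $\Aff_0(\MAX_\boldtau(A^\infty(U)) \cup \MIN_\boldtau(Q))$ applies in particular to the difference $\boldu^\star - \boldv^\star$, giving $\Angle{\boldu^\star,\boldtau'} = \Angle{\boldv^\star,\boldtau'}$. Combined with the equality of max and min at $\boldtau$, the two displayed identities coincide, so $\boldtau \pm \varepsilon\boldtau' \in \CONE(A^\infty(U),Q)$. Writing $\boldtau = \tfrac{1}{2}(\boldtau + \varepsilon\boldtau') + \tfrac{1}{2}(\boldtau - \varepsilon\boldtau')$ and invoking extremality of $\boldtau$, both summands are non-negative multiples of $\boldtau$, whence $\boldtau' \in \Span(\boldtau)$. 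The main obstacle lies entirely on the $A^\infty(U)$ side: controlling how the maximizing face moves under perturbation requires Proposition~\ref{prop:perturb} and ultimately the spectral hypotheses packaged into $\DOM_{\mathcal{L}}(\boldtau)$, whereas the $Q$-side argument is routine because $Q$ is a fixed polytope with no asymptotic behaviour to control.
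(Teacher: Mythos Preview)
Your proof is correct and follows essentially the same route as the paper's. Both arguments first use extremality of $\boldtau$ to force $\max_{\boldu\in A^\infty(U)}\Angle{\boldu,\boldtau}=\min_{\boldv\in Q}\Angle{\boldv,\boldtau}$ (the paper phrases this as ``$\boldtau$ would lie in the interior of the cone''), and for the main inclusion both invoke Proposition~\ref{prop:perturb} on $\pm\boldtau'$ together with the analogous elementary fact for $Q$ to conclude that $\boldtau\pm\varepsilon\boldtau'$ still separates, then finish via extremality; your explicit tracking of $\boldu^\star,\boldv^\star$ is just a more concrete rendering of the paper's line ``$\Angle{\boldu,\boldtau+\varepsilon\boldtau'}=\Angle{\boldv,\boldtau+\varepsilon\boldtau'}$ for all $\boldu\in\MAX_{\boldtau+\varepsilon\boldtau'}(A^\infty(U))$ and $\boldv\in\MIN_{\boldtau+\varepsilon\boldtau'}(Q)$''.
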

\begin{proof}
  We first show the right-to-left inclusion, for which it suffices to
  show that $\boldtau$ lies in the left-hand side.  It is clear that
  $\boldtau \in \Aff_0(\MAX_{\boldtau}(A^\infty(U)))^\perp$ and
  $\boldtau \in \Aff_0(\MIN_{\boldtau}(Q))^\perp$ (e.g., if
  $\boldu,\boldv \in \MIN_{\boldtau}(Q)$ then we have
  $\Angle{\boldu,\boldtau} = \Angle{\boldv,\boldtau}$ and hence $\Angle{\boldu-\boldv,\boldtau}=0$).
  Now consider $\boldu \in \MAX_{\boldtau}(A^\infty(U))$ and $\boldv\in \MIN_{\boldtau}(Q)$.
  By definition we have $\Angle{\boldu,\boldtau}\leq \Angle{\boldv,\boldtau}$.  But if
  $\Angle{\boldu,\boldtau} < \Angle{\boldv,\boldtau}$, then since both $A^\infty(U)$ and
  $Q$ are bounded we have that $\boldtau$ lies in the interior of
  $\CONE(A^\infty(U),Q)$ contradicting the assumption that
  $\boldtau$ is extremal.  Thus we must have
  $\Angle{\boldu,\boldtau}= \Angle{\boldv,\boldtau}$ and hence $\Angle{\boldu-\boldv,\boldtau}=0$.  We
  conclude that
  $\boldtau \in \Aff_0(\MAX_{\boldtau}(A^\infty(U)) \cup \MIN_{\boldtau}(Q))^\perp$.
  This completes the proof of the right-to-left inclusion.

  For the left-to-right inclusion, consider a vector $\boldtau'$ contained
  in the left-hand side of (\ref{eq:extremal}).  We claim that for
  suitably small $\varepsilon\in\R$, both
  $\boldtau+\varepsilon\boldtau'$ and $\boldtau-\varepsilon\boldtau'$ lie in
  $\CONE(A^\infty(U),Q)$.  Since $\boldtau$ is extremal in
  $\CONE(A^\infty(U),Q)$ we conclude that
  $\boldtau'\in\Span(\boldtau)$.

  It remains to prove the claim.  To this end, notice that for
  $\boldu\in\MAX_{\boldtau}(A^\infty(U))$,
  $\boldv\in \MIN_{\boldtau}(Q)$, and
  $\varepsilon\in\R$, we have
  $\Angle{\boldu-\boldv,\boldtau+\varepsilon\boldtau'}
  = \Angle{\boldu-\boldv,\boldtau} = 0$.
  Moreover by Proposition~\ref{prop:perturb}, for $\varepsilon$
  suitably small we have
  $\MAX_{\boldtau}(A^\infty(U)) \subseteq
  \MAX_{\boldtau+\varepsilon\boldtau'}(A^\infty(U))$.
  Similar but simpler reasoning to the proof of
  Proposition~\ref{prop:perturb} also yields that
  $\MIN_{\boldtau}(Q) \subseteq
  \MIN_{\boldtau+\varepsilon\boldtau'}(Q)$ for
  $\varepsilon$ small enough.  It follows that
  $\Angle{\boldu,\boldtau+\varepsilon\boldtau'}
  = \Angle{\boldv,\boldtau+\varepsilon\boldtau'}$ for all
  $\boldu\in
  \MAX_{\boldtau+\varepsilon\boldtau'}(A^\infty(U))$
  and all
  $\boldv\in
  \MIN_{\boldtau+\varepsilon\boldtau'}(Q)$.  Thus
  $\boldtau+\varepsilon\boldtau'$ separates $A^*(U)$
  from $Q$, establishing the claim.
\end{proof}

We can now show our separation lemma.
\begin{proof}[Proof of Lemma~\ref{lem:algebraic_separator}]
Recall that $\CONE(A^\infty(U),Q)$ is a closed pointed cone.
It follows that it has an extremal vector $\boldtau$.  By
Proposition~\ref{prop:rational-supporting-hyperplane}
and Proposition~\ref{prop:one-dim} we can assume that all entries of
$\boldtau$ are algebraic. Indeed we have already noted that
$\DOM_{\mathcal{L}}(\boldtau)$ and $\Aff_0(\MIN_{\boldtau}Q))$
have a basis of rational vectors. We conclude that if there is a
separator of $A^*(U)$ and $Q$ then there is an algebraic separator.
\end{proof}

\begin{theorem}
The reachability problem for simple LTI systems is decidable.
\end{theorem}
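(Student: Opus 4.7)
The plan is to decide reachability by running two semi-decision procedures in parallel. For reachability, as remarked at the start of Section~\ref{sec:simple}, for each fixed $n \in \N$ the question of whether $\sum_{i=0}^{n} A^i(U)$ meets $Q$ reduces to a linear program over $\Q$; iterating over $n$ gives a semi-decision procedure for reachability. For non-reachability, Lemma~\ref{lem:algebraic_separator} asserts that whenever $A^*(U) \cap Q = \emptyset$ there exists an algebraic separator $\boldtau \in \R^d$, so I would enumerate candidate algebraic vectors (via minimal polynomials together with isolating intervals) and verify each. Combined, these two procedures decide reachability, provided the verification step is itself decidable.

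Everything therefore reduces to the verification step: given an algebraic $\boldtau$, decide whether $\sup_{\boldx \in A^\infty(U)} \Angle{\boldx, \boldtau} \leq \inf_{\boldy \in Q} \Angle{\boldy, \boldtau}$. The right-hand side is attained at a rational vertex of $Q$ and is an algebraic number computable by linear programming. For the left-hand side, equation~(\ref{eq:local-max}) yields $\sup_{A^\infty(U)} \Angle{\cdot, \boldtau} = \sum_{i=0}^\infty \max_{\boldu \in \Ext(U)} \Angle{\boldu, (A^\top)^i \boldtau}$, and the main difficulty is to evaluate this infinite sum exactly as an algebraic number.

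The key observation is that by Proposition~\ref{prop:WLOG} we may assume that all eigenvalues of $A$ are positive real, and by the transposed variant of Proposition~\ref{prop:bilinear}, for any two vertices $\boldu, \boldu' \in \Ext(U)$ the sequence $s_i := \Angle{\boldu - \boldu', (A^\top)^i \boldtau}$ has the form $\sum_{j, \ell} \binom{i}{\ell} \lambda_j^i c_{j\ell}$ with algebraic coefficients $c_{j\ell}$ and positive eigenvalues $\lambda_j$. Such a sequence is either identically zero or eventually of constant sign equal to that of its lexicographically dominant nonzero coefficient (ordering the terms by $\lambda_j$, then by $\ell$), and an effective threshold $N_0$ beyond which this sign is realised can be extracted by bounding the subdominant terms against the leading one. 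Applying this to every ordered pair of vertices, I obtain a computable $N \in \N$ and a vertex $\boldv^* \in \Ext(U)$ that maximises $\Angle{\boldu, (A^\top)^i \boldtau}$ over $\boldu \in \Ext(U)$ for every $i \geq N$ (ties among vertices whose difference sequence is identically zero are harmless, as they contribute equal inner products). The maximisers $\boldv_i^*$ for $i < N$ are computed directly by LP.

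The tail then telescopes via the Neumann series: since $\rho(A) < 1$ we have $\sum_{i=N}^\infty (A^\top)^i = (A^\top)^N (I - A^\top)^{-1}$, so that
$\sup_{A^\infty(U)} \Angle{\cdot, \boldtau} = \sum_{i=0}^{N-1} \Angle{\boldv_i^*, (A^\top)^i \boldtau} + \Angle{\boldv^*, (A^\top)^N (I - A^\top)^{-1} \boldtau}$,
which is an algebraic number that can be compared exactly with $\inf_Q \Angle{\cdot, \boldtau}$ by standard algorithms on real algebraic numbers. This completes the verification and hence proves decidability. The hardest step in the plan is the sign analysis of the sequences $s_i$, and it is exactly there that the assumption that some power of $A$ has exclusively real spectrum is essential: without it, one would be reduced to instances of the Positivity Problem, consistently with the hardness results of Section~\ref{sec:app-skolem}.
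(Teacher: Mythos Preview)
Your proposal is correct and follows essentially the same approach as the paper: dovetail the obvious semi-decision procedure for reachability with a search over algebraic separators (justified by Lemma~\ref{lem:algebraic_separator}), and verify each candidate $\boldtau$ by computing $\sup_{A^\infty(U)}\Angle{\cdot,\boldtau}$ exactly via an eventual-maximiser vertex $\boldv^*$, an effective threshold $N$ obtained from the sign analysis of the exponential-polynomial sequences $\Angle{A^i(\boldu-\boldu'),\boldtau}$, and the Neumann-series tail $A^N(I-A)^{-1}\boldv^*$. Your use of $(A^\top)^i\boldtau$ in place of the paper's $A^i\boldu$ is just the transposed form of the same inner product, and your remark on handling ties among vertices with identically zero difference sequence is a correct detail the paper leaves implicit.
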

\begin{proof}
  As we have noted above, it suffices to give a semi-decision
  procedure to show that a target $Q\subseteq\R^d$ is not
  reachable in a given LTI system $\mathcal{L}=(A,U)$.  To show this
  we enumerate all vectors $\boldtau\in\R^d$ with
  algebraic entries and determine whether $\boldtau$
  separates $A^\infty(U)$ from $Q$, that is, whether
\begin{gather} \{ \Angle{\boldu,\boldtau} : \boldu\in A^\infty(U) \}
\leq \min\{ \Angle{\boldv,\boldtau} : \boldv \in Q \} \, .
\label{eq:separate}
\end{gather}

It is straightforward to calculate right-hand side in
(\ref{eq:separate}).  To compute left-hand side the idea is to find
some eventually $\tau$-maximising input $\boldu \in \Ext(U)$
and corresponding threshold $N\in\N$ such that for all
$\boldv\in \Ext(U)$ and all $i\geq N$ we have
$\Angle{A^i \boldu,\boldtau} \geq \Angle{A^i
  \boldv,\boldtau}$.  Given $\boldu$ and $N$,
we obtain an element of $\MAX_{\boldtau}(A^\infty(U))$ as
\[ \sum_{i=0}^{N-1}A^i\boldu_i + A^N\sum_{i=0}^\infty A^i \boldu =
   \sum_{i=0}^{N-1}A^i\boldu_i + A^N(I-A)^{-1}\boldu \]
where $\boldu_i \in \MAX_{\boldtau}(A^i(U))$ for $i=0,\ldots,N-1$.

It remains to find such a $\boldu$ and $N$.  For this we
consider each sequence of the form
$\Angle{A^i(\boldv-\boldw),\boldtau}$ for
$\boldv,\boldw\in \Ext(U)$.  Each such sequence is
either identically zero, ultimately positive, or ultimately negative.
Moreover by examining the dominant term of the sequence we can decide
which of these eventualities is the case and, in case of an ultimately
positive sequence, compute the index from which the sequence becomes
positive.  Clearly this is enough to determine which of the extremal
points of $U$ is an ultimate $\boldtau$-maximiser and to
determine the corresponding threshold $N$.
\end{proof}

\section{Conclusion}
Our main result showed decidability of the LTI Reachability Problem
for so-called simple LTI systems.  The most restrictive condition in
the notion of a simple LTI system (see Section~\ref{sec:simple}) is
that some power of the transition matrix have real spectrum.  This
assumption was crucial in proving that every unreachable point is
separated from the set of reachable points by a hyperplane with
algebraic coefficients.  To illustrate the difficulty with eliminating
or weakening this assumption, consider the following LTI system in 
which the transition matrix performs a counter clockwise
rotation in the plane by angle $\theta$:
\[A=\frac{1}{2}\begin{pmatrix}\cos\theta&-\sin\theta\\\sin\theta&\cos\theta\end{pmatrix},
  \qquad U=[0,1]\times\set{0}.\] Considering the direction
$\tau=(1,0)$, it is not hard to see that the furthest we can go in
direction $\tau$ is
\[\max_{x\in
    A^\infty(U)}\Angle{x,\tau}=
\sum_{n=0}^\infty\max(0,2^{-n}\cos(n\theta))  \, .
\]
Now $A$ has eigenvalues $\lambda=\tfrac{1}{2}e^{i\theta}$ and
$\overline{\lambda}=\tfrac{1}{2}e^{-i\theta}$. If $A$ is simple then a
positive power of $\lambda$ must be real, in other words $\theta$ is a
rational multiple of $\pi$.  In this case the sign of $\cos(n\theta)$
is periodic and we can find an explicit expression for the sum above
and thereby show that it is an algebraic number.  However if $\theta$
is an irrational multiple of $\pi$ and the sign of $\cos(n\theta)$ is
hard to analyse. In particular, we do not know if the resulting sum is
an algebraic number.

\bibliographystyle{plainurl}
\bibliography{refs}

\newpage
\appendix

\section{Undecidability for Invertible Matrix Problems}

Given $k+1$ invertible matrices $A_{1}, \ldots, A_{k},C \in \Q^{d \times d}$, the
\emph{generalized matrix powering problem for invertible matrices} consists in deciding whether
there exist $n_{1}, \ldots, n_{k} \in \Z \setminus \set{0}$ such that
\begin{equation*}
\prod\limits_{i=1}^{k} A_{i}^{n_{i}} = C.
\end{equation*}

The following results are folklore but we could not find any proof of them in the literature.

\begin{theorem}
The generalized matrix powering problem for invertible matrices is undecidable.
\end{theorem}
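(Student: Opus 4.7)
The plan is to reduce from the \emph{vector reachability problem for invertible matrices}, whose undecidability is the folklore result cited in the paper as FOPPW18 and applied in the proof of Theorem~\ref{thm:undecidability}. An instance of that problem consists of invertible matrices $A_1,\ldots,A_k\in\Q^{d\times d}$ and vectors $\boldx,\boldy\in\Q^d$, and asks whether there exist integers $n_1,\ldots,n_k$ with $\prod_{i=1}^k A_i^{n_i}\boldx=\boldy$. From such an instance I would construct, in polynomial time, an instance of the generalized matrix powering problem in dimension $d+r$ for a small constant $r$, consisting of invertible matrices $\widetilde A_1,\ldots,\widetilde A_{k+r}\in\Q^{(d+r)\times(d+r)}$ and a target $C$, such that the two instances have the same truth value.

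First I would embed each $A_i$ as the block-diagonal matrix $\widetilde A_i=\operatorname{diag}(A_i,I_r)$, which acts as $A_i$ on a $d$-dimensional ``state'' block and trivially on an $r$-dimensional ``register'' block. I would then introduce auxiliary invertible generators $\widetilde A_{k+1},\ldots,\widetilde A_{k+r}$ whose role is (i)~to inject $\boldx$ into the state at one end of the product, (ii)~to subtract $\boldy$ at the other end, and (iii)~to use the register coordinates as linear counters that force each auxiliary exponent to take a prescribed nonzero value. The target $C$ is then the unique matrix whose entries simultaneously pin the auxiliary exponents and require the vector-carrying entry of $\prod_i\widetilde A_i^{n_i}$ to vanish, which is precisely the vector reachability condition $(\prod_i A_i^{n_i})\boldx=\boldy$. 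A solution of the matrix-powering instance projects to a solution of vector reachability by reading off $n_1,\ldots,n_k$; conversely, any vector-reachability solution lifts by taking the auxiliary exponents to be their prescribed nonzero values.

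The hard part will be that in any naive block-triangular embedding, the $d\times d$ top-left block of $\prod_i\widetilde A_i^{n_i}$ is literally $\prod_i A_i^{n_i}$, which is unknown when the reduction is constructed and so cannot appear in the fixed target $C$. The construction must therefore arrange that this block either is never compared against a fixed value or is forced independently of the solution to collapse to a known matrix. One family of techniques is to sandwich the product by carefully chosen invertible matrices, or to add a symmetric copy of each $A_i$ paired with $A_i^{-1}$ together with auxiliary conjugations, so that the unknown $\prod_i A_i^{n_i}$ affects only an off-diagonal entry whose content is precisely the vector $(\prod_i A_i^{n_i})\boldx$; the entry of $C$ at that position then enforces the vector reachability equation. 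Once such a cancellation mechanism is in place, verifying correctness is routine, and the restriction $n_i\in\Z\setminus\{0\}$ rather than $n_i\in\Z$ is handled at the cost of a finite ($2^k$) case distinction on the support of the exponent vector.
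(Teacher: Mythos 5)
Your reduction runs in the wrong direction and is circular in the context of this paper. You take as your starting point the undecidability of the vector reachability problem for invertible matrices (ordered products of powers, $\prod_{i=1}^k A_i^{n_i}\boldx=\boldy$), citing the folklore status and the reference given in Section~2. But that reference is the full version of the present paper, and the only proof on offer there (reproduced in this appendix) establishes undecidability of vector reachability \emph{by reduction from the generalized matrix powering problem} --- the very statement you are asked to prove. The paper's own proof of the present theorem therefore does not go through vector reachability at all: it reduces directly from Hilbert's Tenth Problem, encoding the atomic relations $z=c$, $z=x+y$, $z=xy$ by fixed $2\times 2$ and $3\times 3$ unipotent (upper-triangular) matrices whose powers place the exponents in off-diagonal entries, and handling conjunction by block-diagonal stacking. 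Without an independent proof of undecidability of the ordered-powers vector reachability problem, your argument has no non-circular foundation; and any such independent proof would essentially have to redo the Hilbert's Tenth encoding anyway.

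Even granting the starting point, the technical core of your reduction is left unresolved exactly where you flag it. The equation $\prod_j B_j^{m_j}=C$ pins down an entire matrix, whereas vector reachability only constrains the action on $\boldx$, so you must make the unknown block $\prod_i A_i^{n_i}$ disappear. Your suggestions (``sandwiching'' by fixed invertible matrices cannot work, since $X\mapsto PXQ$ is injective and still determines $\prod_i A_i^{n_i}$) stop short of the one mechanism that does: duplicate each generator, in reversed order, as $\operatorname{diag}(A_i,\ldots)$ with a second exponent $m_i$, and link $m_i=-n_i$ by giving both copies a shared unipotent $2\times 2$ counter block whose target entry is $0$; then the product telescopes to the identity on the state block, and an affine column (inserted by a translation-by-$\boldx$ generator whose exponent is forced to $1$ by another counter) carries exactly $\bigl(\prod_i A_i^{n_i}\bigr)\boldx$, to be compared with $\boldy$ in $C$. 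As written, your proposal neither supplies this construction nor verifies it, so the claim that ``verifying correctness is routine'' is not yet earned --- and, more importantly, fixing it would still leave the circularity above.
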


\begin{proof}
We will show this result by reducing from Hilbert's Tenth Problem. Given a polynomial $p \in \Z[n_{1}, \ldots, n_{k}]$, it is easy to express $p(n_{1}, \ldots, n_{k})$ as a conjunction of relations of the following form (noting that we may need to introduce new variables):
\begin{itemize}
\item $z = k$, where $k \in \Z$
\item $z = x+y$
\item $z = xy$.
\end{itemize}
We start by showing how to encode each of these as an instance of the generalized matrix powering problem for invertible matrices.
Firstly, note that
\begin{equation*}
z = k \Leftrightarrow
\begin{pmatrix}
    1 & 1 \\
    0 & 1
\end{pmatrix}^{z} =
\begin{pmatrix}
    1 & k \\
    0 & 1
\end{pmatrix}.
\end{equation*}
Secondly, note that
\begin{equation*}
    z = x + y \Leftrightarrow
\begin{pmatrix}
    1 & 1 \\
    0 & 1
\end{pmatrix}^{x}
\begin{pmatrix}
    1 & 1 \\
    0 & 1
\end{pmatrix}^{y}
\begin{pmatrix}
    1 & -1 \\
    0 & 1
\end{pmatrix}^{z} =
\begin{pmatrix}
    1 & 0 \\
    0 & 1
\end{pmatrix}.
\end{equation*}
Thirdly, note that
\begin{equation*}
    z = xy \Leftrightarrow
    \exists x',y' \in \Z,
    \begin{pmatrix}
        1 & 0 & 0 \\
        0 & 1 & 0 \\
        0 & 0 & 1
    \end{pmatrix} =
    \begin{pmatrix}
        1 & x-x' & z-xy \\
        0 & 1 & y-y' \\
        0 & 0 & 1
    \end{pmatrix}
\end{equation*}
and that the latter matrix is just equal to
\begin{equation*}
    \begin{pmatrix}
        1 & 0 & -1 \\
        0 & 1 & 0 \\
        0 & 0 & 1
    \end{pmatrix}^{z}
    \begin{pmatrix}
        1 & 0 & 0 \\
        0 & 1 & -1 \\
        0 & 0 & 1
    \end{pmatrix}^{y'}
    \begin{pmatrix}
        1 & 1 & 0 \\
        0 & 1 & 0 \\
        0 & 0 & 1
    \end{pmatrix}^{x}
    \begin{pmatrix}
        1 & 0 & 0 \\
        0 & 1 & 1 \\
        0 & 0 & 1
    \end{pmatrix}^{y}
    \begin{pmatrix}
        1 & -1 & 0 \\
        0 & 1 & 0 \\
        0 & 0 & 1
    \end{pmatrix}^{x'}.
\end{equation*}
Finally, conjunction can be achieved by making use of separate matrix blocks:
\begin{equation*}
    \prod\limits_{i=1}^{k} A_{i}^{n_{i}} = C \wedge \prod\limits_{i=1}^{k} B_{i}^{n_{i}} = D \Leftrightarrow
    \prod\limits_{i=1}^{k} \begin{pmatrix}A_{i} & 0 \\ 0 & B_{i}\end{pmatrix}^{n_{i}} = \begin{pmatrix}C & 0 \\ 0 & D\end{pmatrix}.
\end{equation*}
\end{proof}

\begin{definition}
Given invertible matrices $A_{1}, \ldots, A_{k} \in \Q^{d \times d}$ and two non-zero vectors $\boldsymbol{x}, \boldsymbol{y} \in \Q^{d}$, the \emph{vector reachability problem for invertible matrices} consists in deciding whether there exist $n_{1}, \ldots, n_{k} \in \Z \setminus \set{0}$ such that
\begin{equation*}
\prod\limits_{i=1}^{k}A_{i}^{n_{i}} \boldsymbol{x} = \boldsymbol{y}.
\end{equation*}
\end{definition}

\begin{theorem}
The vector reachability problem for invertible matrices is undecidable.
\end{theorem}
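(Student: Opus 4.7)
The plan is to reduce from the generalized matrix powering problem for invertible matrices, whose undecidability is established in the theorem immediately above. Given invertible matrices $A_1, \ldots, A_k, C \in \Q^{d\times d}$, I will construct in polynomial time a vector-reachability instance in dimension $D := d^2$ whose solvability coincides with the existence of nonzero integers $n_1, \ldots, n_k$ satisfying $\prod_{i=1}^k A_i^{n_i} = C$.

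The idea is to witness a matrix equation by acting on every column of the identity at once. Concretely, for each $i$ I take $\widetilde{A}_i$ to be the block-diagonal matrix $\operatorname{diag}(A_i, \ldots, A_i) \in \Q^{D \times D}$ with $d$ copies of $A_i$ along the diagonal, let $\boldx \in \Q^{D}$ be the stacking of the columns $\bolde_1, \ldots, \bolde_d$ of $I_d$, and let $\boldy \in \Q^{D}$ be the stacking of the columns of $C$. Each $\widetilde{A}_i$ is invertible because $A_i$ is, and both $\boldx$ and $\boldy$ are nonzero (the latter because $C$ is invertible), so this produces a legal instance of the vector reachability problem for invertible matrices.

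To close the reduction, the key observation is that for any integers $n_1, \ldots, n_k$ the vector $\bigl(\prod_{i=1}^k \widetilde{A}_i^{n_i}\bigr) \boldx$ is precisely the stacking of the vectors $\bigl(\prod_{i=1}^k A_i^{n_i}\bigr) \bolde_j$ for $j = 1, \ldots, d$, i.e., the stacking of the columns of $M := \prod_{i=1}^k A_i^{n_i}$. Therefore $\bigl(\prod_{i=1}^k \widetilde{A}_i^{n_i}\bigr) \boldx = \boldy$ holds if and only if $M = C$, so the two problems are equivalent and they share exactly the same set of valid exponent tuples in $(\Z \setminus \{0\})^k$. There is no serious obstacle here: the only design decision is the choice of encoding (stacked columns acted on block-diagonally), after which invertibility transfers for free and the equivalence is a routine block-matrix computation.
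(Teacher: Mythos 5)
Your reduction is exactly the one the paper uses: block-diagonal copies $\operatorname{diag}(A_i,\ldots,A_i)$ acting on the stacked columns of the identity, with target the stacked columns of $C$, so that the vector equation holds precisely when $\prod_i A_i^{n_i}=C$ for the same exponent tuples. The proposal is correct and matches the paper's proof, including the observations that invertibility and non-zeroness of the vectors carry over.
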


\begin{proof}
This can be shown by reduction from the generalised matrix powering problem for invertible matrices. In particular, given invertible matrices $A_{1}, \ldots, A_{k}, B \in \Q^{d \times d}$, letting $\boldsymbol{b}_{1}, \ldots, \boldsymbol{b}_{d}$ denote the columns of $B$, and letting $\boldsymbol{e}_{1}, \ldots, \boldsymbol{e}_{d}$ denote the canonical basis of $\R^{d}$, the result follows from the fact that
    \begin{equation*}
        \prod\limits_{i=1}^{k} A_{i}^{n_{i}} = B \Leftrightarrow
        \prod\limits_{i=1}^{k}
        \begin{pmatrix}
            A_{i} & \cdots & 0 \\
            \vdots& \ddots & \vdots \\
            0 & \cdots & A_{i}
        \end{pmatrix}^{n_{i}}
        \begin{pmatrix}
            \boldsymbol{e}_{1} \\
            \vdots \\
            \boldsymbol{e}_{d}
        \end{pmatrix} =
        \begin{pmatrix}
            \boldsymbol{b}_{1} \\
            \vdots \\
            \boldsymbol{b}_{d}
        \end{pmatrix}.
    \end{equation*}
\end{proof}

\end{document}